\theoremstyle{plain}
\newtheorem{thm}{Theorem}[section]
\newtheorem{cor}[thm]{Corollary}
\newtheorem{lem}[thm]{Lemma}
\newtheorem{prop}[thm]{Proposition}
\theoremstyle{definition}
\theoremstyle{remark}
\newtheorem{rem}{Remark}
\def\R{\mathbb{R}}
\newcommand{\bu}{\mathbf{u}}
\newcommand{\bv}{\mathbf{v}}
\newcommand{\w}{\omega} 
\newcommand{\be}{\begin{equation}} 
	\newcommand{\ee}{\end{equation}} 
\newcommand\pkd{\mbox{\rm dim}_{\rm P}\,} 
\newcommand\hdd{\mbox{\rm dim}_{\rm H}\,} 
\newcommand\ubd{\overline{\mbox{\rm dim}}_{\rm B}\,} 
\newcommand\lbd{\underline{\mbox{\rm dim}}_{\rm B}\,} 
\newcommand{\red}[1]{{\color{red}#1}}
\title[]{Generalized $q$-dimensions of measures on  Non-autonomous conformal sets}
\date{}
\author[J. J. Miao]{Jun Jie Miao}
\address[J. J. Miao]{School of Mathematical Sciences,  Key Laboratory of MEA(Ministry of Education) \& Shanghai Key Laboratory of PMMP,  East China Normal University, Shanghai 200241, P.~R. China}
\email{jjmiao@math.ecnu.edu.cn}
\author[Tianrui Wang]{Tianrui Wang}
\address[Tianrui Wang]{School of Mathematical Sciences,  Key Laboratory of MEA(Ministry of Education) \& Shanghai Key Laboratory of PMMP,  East China Normal University, Shanghai 200241, P.~R. China}
\email{51265500036@stu.ecnu.edu.cn}
\begin{document}
	\begin{abstract}
We study the generalized $q$-dimensions of measures supported on  non-autonomous conformal attractors,  which are the generalizations of  Moran sets and the attractors of iterated function systems. We first prove that the critical values of generalized upper and lower pressure functions are always the upper bounds for the upper and lower generalized $q$-dimensions of  measures supported on non-autonomous conformal sets. Then we obtain dimension formulas for generalized $q$-dimensions if non-autonomous conformal attractors satisfy  certain separation conditions, and moreover, the generalized $q$-dimension formulae may be simplified for the Bernoulli measures. Finally, we provide the generalized $q$-dimension formulae for measures supported on autonomous conformal sets.
	\end{abstract}
	
	\maketitle

\section{Introduction}
\subsection{Generalized $q$-dimensions.}
The generalized $q$-dimensions of compactly supported Borel probability measures are important concepts in  fractal geometry and dynamical system which were introduced by R\'{e}nyi in \cite{A.R, A.R.} in the 1960s. They  quantify the global fluctuations of a given measure $\nu$ and provide valuable information about the multifractal properties of $\nu$ and also about the dimensions of its support. 

Let $\nu$ be a positive finite Borel measure on $\mathbb{R}^d$. For $q\not=1$, the {\it lower} and {\it upper generalized $q$-dimensions} of $\nu$ are given by
\begin{equation}\label{Dq}
\underline{D}_q(\nu)=\liminf_{\delta\to 0}\frac{\log \sum_{Q\in{\mathcal{M}_\delta}}\nu(Q)^q}{(q-1)\log \delta},  \qquad   \overline{D}_q(\nu)=\limsup_{\delta\to 0}\frac{\log \sum_{Q\in{\mathcal{M}_\delta}}\nu(Q)^q}{(q-1)\log \delta},
\end{equation}
where $\mathcal{M}_\delta$ is the family of $\delta$-mesh cubes in $\mathbb{R}^d$. For $q=1$, $\underline{D}_1(\nu)$ and $\overline{D}_1(\nu)$ are defined by
\begin{equation}\label{D1}
\underline{D}_1(\nu)=\liminf_{\delta\to 0}\frac{ \sum_{{Q\in\mathcal{M}_\delta}}\nu(Q)\log\nu(Q)}{\log \delta},  \quad   \overline{D}_1(\nu)=\limsup_{\delta\to 0}\frac{ \sum_{{Q\in\mathcal{M}_\delta}}\nu(Q)\log\nu(Q)}{\log \delta},
\end{equation}
If $\underline{D}_q(\nu)=\overline{D}_q(\nu)$, we write ${D}_q(\nu)$ for the common value which we refer to as the generalized $q$-dimension. Note that the generalized $q$-dimension of a measure contains information about the measure and its support. It directly follows from the definition that
$$
\ubd \mbox{spt}(\nu)=\overline{D}_0(\nu),  \qquad  \lbd \mbox{spt}(\nu)=\underline{D}_0(\nu),
$$
where {\it spt} denotes the support of $\nu$. We refer readers to \cite{Fal, YB} for  background reading.

The generalized $q$-dimension is very important in dimension theory, and it has a wide range of applications. Shmerkin \cite{S} has computed the generalized $q$-dimensions of dynamically driven self-similar measures, which are a class of non-autonomous similar measures supported on non-autonomous similar sets, and has used them to prove Furstenberg's long-standing conjecture on the dimension of the intersections of $\times p$- and $\times q$-invariant sets, stating that the mappings $T_p$ and $T_q$ are strongly transverse.

A concept intimately related to the generalized $q$-dimension is the $L^q$-spectrum, which is closely related to various key concepts in fractal geometry. For $q \neq 1$, it is defined by $\tau_q(\nu) = (1-q)D_q(\nu)$. There is a rich literature concerning measures supported on fractal sets. It was shown by Peres and Solomyak \cite{BS} that the $L^q$ spectrum of any self-conformal measure exists for $q > 0$.   Fraser \cite{JMF}   extended it to graph-directed self-similar measures and self-affine measures.  In \cite{PN}, for a conformal iterated function system satisfying the strong open set condition, Patzschke used pressure function to  determine a precise formula for  $D_q(\mu)$ of self-conformal measure $\mu$. Miao and Wu \cite{MWu}  studied the generalized $q$-dimension of measures on the Heisenberg group. We refer the reader to \cite{Falco05,Falco10,FFL,FX,F,JMF,K,SMN,SX,YYL} for further related works.

It is then natural to explore the properties of generalized $q$-dimensions for measures supported on non-autonomous attractors. However, unlike classical attractors generated by iterated function systems, such sets generally lack dynamical invariance, rendering the powerful tools of ergodic theory inapplicable. Consequently, the existence of generalized $q$-dimensions is less common in this setting, and determining their properties becomes significantly more challenging. Some progress has been made by Gu and Miao \cite{GM2}, who provided formulas for the generalized $q$-dimensions of non-autonomous similar measures and  non-autonomous affine measures. 

	\subsection{Non-autonomous conformal iterated function systems.}

Non-autonomous iterated function systems may be regarded as a generalization of iterated function systems. First, we recall the definitions of  non-autonomous iterated function systems.

Let $\{I_{k}\}_{k\geq 1}$ be a sequence of finite index sets with $\# I_k\geq 2$. Given integers $k\ge l \ge 1$,  we write
\begin{equation}\label{finite I}
\Sigma_l^{k}=\{u_{l}u_{l+1}\ldots u_{k}: u_{j}\in I_j, j=l,l+1,\ldots, k \},
\end{equation}
and for simplicity, we set $ \Sigma^k=\Sigma^{k}_1 $ if $l=1$. We write $\Sigma^{*}=\bigcup _{k=0}^{\infty }\Sigma^{k}$ for the set of all finite words with $
\Sigma^{0}=\{\emptyset \}$ containing only the empty word $\emptyset$. We write
\begin{equation}\label{def_infSeqS}
\Sigma^{\infty}=\{\mathbf{u}=u_{1}u_{2}\ldots u_{k}\ldots : u_{k}\in I_k ,\  k=1,2,\ldots \}
\end{equation}
for the set of words with infinite length.

Let $J\subset\mathbb{R}^d$  be a compact set with non-empty interior, and $J$ satisfies $\overline{\mbox{int}(J)}=J$. For each integer $k>0$, let $\Phi_k=\{\varphi_{k, i} \}_{i\in I_k}$ be a family of mappings $\varphi_{k, i}:J\to J$. We say the collection $\mathcal{J}=\{J_{\mathbf{u}}:\mathbf{u}\in \Sigma^*\}$ of closed subsets of $J$ fulfils the \textit{non-autonomous structure with respect to $\{\Phi_k\}_{k=1}^\infty$} if it satisfies the following conditions:
\begin{itemize}
\item[(i).] There exists $0<c<1$ such that for all integer $k>0$ and all $i\in I_k$,
\begin{equation} \label{def_uccdn}
|\varphi_{k, i}(x)-\varphi_{k, i}(y)|\le  c|x-y| \qquad \textit{ for all }  x, y\in J.
\end{equation}
\item[(ii).] For all integers $k>0$ and all $\mathbf{u}\in \Sigma^{k-1}$, the elements $J_{\mathbf{u}i}, i\in I_k$ of $\mathcal{J}$ are the subsets of $J_{\mathbf{u}}$. We write $J_{\emptyset }=J$ for the empty word $\emptyset $.

\item[(iii).] For each $\mathbf{u}=u_1\ldots u_k \in \Sigma^*$,  there exists $\w_{\bu} \in \R^d$ and $\Psi_\mathbf{u}: \mathbb{R}^{d}\rightarrow \mathbb{R}^{d}$  such that
\begin{equation}\label{basic set}
    J_{\mathbf{u}}=\Psi_{\mathbf{u}}(J)=\varphi_\bu(J)+\omega_\bu,
   \end{equation}
    where $\varphi_\bu=\varphi_{u_1}\circ \cdots \circ \varphi_{u_j} \cdots \circ \varphi_{u_k}  $ and $\varphi_{u_j}\in \Phi_j$.
\end{itemize}
We call  $\boldsymbol{\Phi}=\{\Phi_k\}_{k=1}^\infty$   a \emph{non-autonomous iterated function system} and
\begin{equation}\label{att}
E=E(\boldsymbol{\Phi})=\bigcap_{k=1}^\infty \bigcup_{\mathbf{u}\in\Sigma^k}\Psi_{\mathbf{u}}(J)
\end{equation}
the \emph{non-autonomous attractor} of $\boldsymbol{\Phi}$. If for all integers $k\ge 0$ and all $\mathbf{u}\in\Sigma^{k},$  
\begin{equation}\label{def_OSC}
\mbox{int}(J_{\mathbf{u}i})\cap \mbox{int}(J_{\mathbf{u}j})=\emptyset \qquad \textit{for all $i\not= j\in I_{k+1}$,}
\end{equation}
 we say that $E$ satisfies the \emph{open set condition (OSC)}.
If \eqref{def_OSC} is replaced by 
$$
J_{\mathbf{u}i}\cap J_{\mathbf{u}j}=\emptyset, \qquad \textit{for all $i\not= j\in I_{k+1}$,}
$$
we say that $E$ satisfies the \emph{strong separation condition (SSC)}.

Let $\boldsymbol{\Phi}$ be a non-autonomous iterated function system satisfying that
\begin{itemize}
\item[(iv).] There exists an open connected set $V$ independent of $k$ with $J\subset V$ such that each $\varphi_{k, j}$ extends to a $C^1$ conformal diffeomorphism of $V$ into $V$.
\item[(v).] There exists a constant $C\ge 1$ such that for all $\bu=u_lu_{l+1}\ldots u_k\in\Sigma_l^k$ and   all $x, y\in V$.
$$
\|D\varphi_{\bu}(x)\|\le C\|D\varphi_{\bu}(y)\|,
$$ 
where $D\varphi_\bu(x)$ is the derivative of $\varphi_\bu$  at $x$.
\end{itemize}
We say $\boldsymbol{\Phi}=\{\Phi_k\}_{k=1}^\infty$ is a \emph{non-autonomous conformal iterated function system (NCIFS)}, and we call its attractor $E$ the \emph{non-autonomous conformal set} of $\boldsymbol{\Phi}$.  Let $\|D\varphi_\bu\| =\sup\{\|D\varphi_\bu(x)\| : x \in J\}$  and
\begin{equation}\label{Mk}
 M_k=\max_{\mathbf{u} \in \Sigma^k} \{\|D\varphi_\mathbf{u}\|\}, \quad  \quad
\underline c_k=\min_{1\le j \le \# I_k} \{\|D\varphi_{k, j}\|\}.
\end{equation}
If $\Phi_k$ only consists of similarities for all $k\geq 1$, and the corresponding attractor $E$ satisfies the open set condition, then $E$ is called a Moran set. 

There has been a large amount of literature on the dimension theory of non-autonomous fractals \cite{GM, MW,RM}. In particular, Moran sets are a typical case of non-autonomous conformal sets, and under the assumption 
\begin{equation}\label{condition}
\lim_{k\to +\infty} \frac{\log\underline c_k}{\log M_k}=0,
\end{equation}
where $\underline{c}_k=\min_{1\le j \le n_k} \{c_{k,j}\},$ and $M_k=\max_{\mathbf{u} \in \Sigma^k} \lvert J_\mathbf{u} \rvert$, Hua, Rao, Wen and Wu in \cite{HRWW} proved that the dimension formulas of Moran set are given by
	\begin{equation*}
		\hdd E=s_{\ast }=\liminf_{m\rightarrow \infty }s_{m}, \quad \pkd E=\ubd E=s^{\ast }=\limsup_{m\rightarrow \infty }s_{m}.
	\end{equation*}
where $s_{k}$ is the unique real solution
	of the equation $\prod\nolimits_{i=1}^{k} \sum\nolimits_{j=1}^{n_{i}}(c_{i,j})^{s}=1$.  We refer readers to  \cite{GM,Hua,RM, Wen00} for details and related works. 

In this paper, we study the  generalized $q$-dimension of measures supported on non-autonomous conformal iterated function systems $\boldsymbol{\Phi}$ satisfying open set condition and \eqref{condition} which plays a fundamental role in the study  of Moran fractals.

\subsection{Symbolic space and Pressure functions}
Let $\Sigma^{k}=\Sigma_1^{k}$ and $\Sigma^\infty$ be given by \eqref{finite I} and \eqref{def_infSeqS}, respectively.
We  topologize $\Sigma^\infty$ using the metric
$d(\mathbf{u},\mathbf{v})=2^{-|\mathbf{u}\wedge  \mathbf{v}|}$ for distinct $\mathbf{u},\mathbf{v} \in \Sigma^\infty$ to make $\Sigma^\infty$ into a compact metric space. For each $\bu \in\Sigma^\infty$, we write $\bu|_n=u_1\ldots u_n$.  For each $\mathbf{u}=u_1 \dots u_k \in \Sigma^k$, we write $\mathbf{u}^* =u_1 \dots u_{k-1}$.  Given $\mathbf{u}\in \Sigma^l$, for $\mathbf{v}\in \Sigma^k$ where $k\geq l$ or $\mathbf{v}\in\Sigma^\infty $,  we write $\mathbf{u}\prec \mathbf{v}$ if $u_i =v_i$ for all $i=1,2,\ldots, l$.  We define the \textit{cylinders} $[\bu]=\{\mathbf{v}\in \Sigma^\infty : \mathbf{u}\prec \mathbf{v}\}$ for $\mathbf{u}\in \Sigma^*$; the set of cylinders $\{[\bu] : \mathbf{u} \in \Sigma^* \}$ forms a base of open and closed neighbourhoods for $\Sigma^\infty$. We term a subset $\mathcal{C}$ of $\Sigma^*$ a
\textit{cut set} if $\Sigma^\infty\subset\bigcup_{\mathbf{u}\in \mathcal{C}}[\bu]$, where $[\bu]\cap[\bv]=\emptyset$ for  all $\mathbf{u}\neq \mathbf{v}\in \mathcal{C}$. It is equivalent to that, for every $\mathbf{w}\in \Sigma^\infty$, there is a unique word $\mathbf{u}\in \mathcal{C}$ with $|\mathbf{u}|<\infty$ such that $\mathbf{u}\prec \mathbf{w}$. Given a cut set $\mathcal{C}$, we write 
$$
k_{\mathcal{C} }=\min\{|\mathbf{u}|:\mathbf{u}\in\mathcal{C}\}.
$$

We define the projection mapping $\pi_{\boldsymbol{\Phi}}:\Sigma^\infty\to J$ by $ \pi_{\boldsymbol{\Phi}}(\mathbf{u})=\bigcap_{n=1}^\infty J_{\mathbf{u}|_n}.$ 
Alternatively we may write the non-autonomous set of $\boldsymbol{\Phi}$ as
\begin{equation}\label{att_1}
E=E(\boldsymbol{\Phi})=\pi_{\boldsymbol{\Phi}}(\Sigma^\infty).
\end{equation}
Given a positive finite Borel measure $\mu$ on $\Sigma^\infty$, the image measure  $\mu^\omega$  of $\mu$ given by
\begin{equation}\label{mu}
\mu^\omega(A)=\mu\{\mathbf{u}:\pi_{\boldsymbol{\Phi}}(\mathbf{u})\in A\}\qquad  \textit{for $A\subset \mathbb{R}^d$}
\end{equation}
is a Borel measure supported on the non-autonomous conformal set $E$. We say $\mu^\omega$ satisfies the \emph{bounded overlap condition (BOC)} if there exists a constant $C\ge 1$ such that for each $\bu\in\Sigma^*$
\begin{equation}\label{condition ssc}
C^{-1}\mu^\omega(J_\bu)\le\mu([\bu])\le C\mu^\omega(J_\bu),
\end{equation}

Given a sequence of probability vectors $\{\mathbf{p}_k=(p_{k, 1}, \ldots, p_{k, \#I_k})\}_{k=1}^\infty$, that is,  for each  $k>0$,  $\sum_{i=1}^{\#I_k}p_{k, i}=1$, for each cylinder $[\bu]$, we   define $\mu$ on $\Sigma^\infty$ by setting
\begin{equation}\label{b m}
\mu([\bu])=p_\mathbf{u}=p_{1, u_1}p_{2, u_2}\cdots p_{k, u_k},
\end{equation}
 and extend it  to  a measure on $\Sigma^\infty$ in the usual way. We call the corresponding  projection measure $\mu^\omega$ on $E$ given by \eqref{mu}  a \emph{ non-autonomous conformal measure}.

Given a NCIFS $\boldsymbol{\Phi}$ and a positive finite Borel measure $\mu$ on $\Sigma^\infty$. For  $\delta>0$, let
$$
\mathcal{C}(\delta)=\{\mathbf{u}\in\Sigma^*: \|D\varphi_\mathbf{u}\|\le \delta< \|D\varphi_{\mathbf{u}^*}\| \},
$$
and for simplicity, we write $k_{\delta}= k_{\mathcal{C}(\delta)}=\min\{|\mathbf{u}|:\mathbf{u}\in\mathcal{C}(\delta)\}.$
  For $t\in \mathbb{R}$, we define generalized upper and lower pressure functions of $\mu$ respectively by    
\begin{equation}\label{def_GULPF}
\begin{split}
&\overline P_\mu(t, q)=\left\{ \begin{aligned} &\limsup_{\delta\to 0} \frac{\mathrm{sgn}(1-q)}{k_{\delta}}\log \sum_{\mathbf{u}\in\mathcal{C}(\delta)}\|D\varphi_\mathbf{u}\|^{t(1-q)}\mu([\bu])^q ,  \textit{  $q>0$, $q\not=1$}   \\
&\limsup_{\delta\to 0} -\frac{1}{k_{\delta}} \sum_{\mathbf{u}\in\mathcal{C}(\delta)}\mu([\bu])\log(\|D\varphi_\mathbf{u}\|^{-t}\mu([\bu])), \hspace{1cm}q=1,
\end{aligned} \right.   \\
&\underline P_\mu(t, q)=\left\{ \begin{aligned} & \liminf_{\delta\to 0} \frac{\mathrm{sgn}(1-q)}{k_{\delta}}\log \sum_{\mathbf{u}\in\mathcal{C}(\delta)}\|D\varphi_\mathbf{u}\|^{t(1-q)}\mu([\bu])^q ,  \textit{  $q>0$, $q\not=1$}   \\
&\liminf_{\delta\to 0} -\frac{1}{k_{\delta}} \sum_{\mathbf{u}\in\mathcal{C}(\delta)}\mu([\bu])\log(\|D\varphi_\mathbf{u}\|^{-t}\mu([\bu])), \hspace{1cm} q=1.
\end{aligned} \right.
\end{split}
\end{equation}    
If $\overline P_\mu(t, q)=\underline P_\mu(t, q)$, we call the common value, denoted by $P_\mu(t, q)$, the generalized pressure function.
We write their jump points respectively as
\begin{equation}\label{d*}
\begin{split}
\overline{d}_q^*=\inf\{t:\overline P_\mu(t, q)<0\}=\sup\{t:\overline P_\mu(t, q)>0\},  \\
\underline{d}_q^*=\inf\{t:\underline P_\mu(t, q)<0\}=\sup\{t:\underline P_\mu(t, q)>0\}.
\end{split}
\end{equation}             
Note that $\overline{d}_q^*$ and $\underline{d}_q^*$ may be infinite, and their existence  is established by Lemma \ref{m d}.

\subsection{Main conclusions}

Generally, it is difficult to find generalized $q$-dimensions of measures supported on non-autonomous conformal sets, but we are still able to provide some rough estimates under certain conditions. First, we show that $\underline{d}_q^*$ and $\overline{d}_q^*$ are  natural upper bound for lower and upper generalized $q$-dimensions.

\begin{thm}\label{upper bound}

Let $\Phi$ be a NCIFS satisfying (1.10) and $\mu$ a positive finite Borel measure on $\Sigma^{\infty}$. Let $\mu^{\omega}$ be the image measure of $\mu$. Then for all $q>0$,
$$
\underline{D}_q(\mu^\omega)\le\min\{\underline{d}_q^*, d\},  \qquad  \overline{D}_q(\mu^\omega)\le\min\{\overline{d}_q^*, d\},
$$
where $\underline{d}_q^*$ and $\overline{d}_q^*$ are given by \eqref{d*}.
\end{thm}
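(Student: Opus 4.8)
The plan is to sandwich the mesh-cube $q$-sums of \eqref{Dq}--\eqref{D1} between the pressure sums of \eqref{def_GULPF}, with all errors subexponential in $\log(1/\delta)$ so that they vanish after dividing by $(q-1)\log\delta$. Throughout write $T_\delta=\sum_{Q\in\mathcal M_\delta}\mu^\omega(Q)^q$ and $S_\delta(t)=\sum_{\bu\in\mathcal C(\delta)}\|D\varphi_\bu\|^{t(1-q)}\mu([\bu])^q$. The bound $\le d$ is separate and elementary: at most $C\delta^{-d}$ mesh cubes meet $\mathrm{spt}(\mu^\omega)$, so the power--mean inequality gives $T_\delta\ge(C\delta^{-d})^{1-q}\|\mu\|^q$ for $q>1$ and the reverse inequality for $0<q<1$; dividing by $(q-1)\log\delta$ yields $\overline{D}_q,\underline{D}_q\le d$. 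It remains to prove $\overline{D}_q(\mu^\omega)\le\overline d_q^*$ and $\underline{D}_q(\mu^\omega)\le\underline d_q^*$, which we may assume nonnegative and finite (otherwise the bound $\le d$ suffices).

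First I would establish the geometric comparison $T_\delta\asymp\sum_{\bu\in\mathcal C(\delta)}\mu([\bu])^q$, up to a constant depending only on $d$ and the distortion constant $C$, and crucially \emph{without} any separation hypothesis. Each $J_\bu$ with $\bu\in\mathcal C(\delta)$ has $\diam J_\bu\le C\delta$ by conformality and (v), so it meets a bounded number of $\delta$-mesh cubes; conversely, since $\mathcal C(\delta)$ is a cut set the cylinders $[\bu]$ are pairwise disjoint in $\Sigma^\infty$, and $[\bu]\subset\pi_{\boldsymbol\Phi}^{-1}(J_\bu)$. For $0<q<1$ I would use $\mu^\omega(Q)\le\sum_{\bu:J_\bu\cap Q\neq\emptyset}\mu([\bu])$ together with subadditivity of $x\mapsto x^q$ to get $T_\delta\le L\sum_\bu\mu([\bu])^q$. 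For $q>1$ I would instead enlarge each $Q$ to a cube $\tilde Q$ of side a fixed multiple of $\delta$, so that $J_\bu\cap Q\neq\emptyset$ forces $J_\bu\subset\tilde Q$; disjointness of the cylinders then yields $\mu^\omega(\tilde Q)\ge\sum_{\bu:J_\bu\subset\tilde Q}\mu([\bu])$, and superadditivity $(\sum a_i)^q\ge\sum a_i^q$ together with the bounded overlap of the $\tilde Q$ gives $T_\delta\ge\kappa^{-q}\sum_\bu\mu([\bu])^q$. This symbolic-space disjointness is exactly what makes the estimate valid with no open set or bounded-overlap assumption.

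Next I would convert $\sum_{\bu\in\mathcal C(\delta)}\mu([\bu])^q$ into $S_\delta(t)$. Writing $\mu([\bu])^q=\|D\varphi_\bu\|^{t(q-1)}\bigl(\|D\varphi_\bu\|^{t(1-q)}\mu([\bu])^q\bigr)$ and using $C^{-1}\delta\,\underline c_{|\bu|}<\|D\varphi_\bu\|\le\delta$ (the upper bound from the definition of $\mathcal C(\delta)$, the lower bound from (v) and the chain rule), the factor $\|D\varphi_\bu\|^{t(q-1)}$ lies between $\delta^{t(q-1)}$ and $(C^{-1}\delta\,\underline c_{|\bu|})^{t(q-1)}$, so $\sum_\bu\mu([\bu])^q=\delta^{t(q-1)+\eta(\delta)}S_\delta(t)$ with an error exponent $\eta(\delta)$ coming from $\underline c_{|\bu|}^{\pm t(q-1)}$. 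Combining the two reductions and taking logarithms, $\log T_\delta=\bigl(t(q-1)+\eta(\delta)\bigr)\log\delta+\log S_\delta(t)+O(1)$. Dividing by $(q-1)\log\delta$ and noting that $t>\overline d_q^*$ (resp.\ $t>\underline d_q^*$) forces $\mathrm{sgn}(1-q)\log S_\delta(t)<0$ eventually (resp.\ along a subsequence), which is precisely $\log S_\delta(t)/\bigl((q-1)\log\delta\bigr)\le0$, gives $\limsup$ (resp.\ $\liminf$)~$\le t+o(1)$; letting $t\downarrow\overline d_q^*$ (resp.\ $\underline d_q^*$) finishes the case $q\neq1$. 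The case $q=1$ follows verbatim with the $q$-sums replaced by the entropy sums in \eqref{D1} and in the $q=1$ line of \eqref{def_GULPF}, using concavity of $x\mapsto x\log x$ in place of the power inequalities.

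The main obstacle is showing that the error exponent is negligible, namely $\underline c_{|\bu|}=\delta^{o(1)}$ uniformly over $\bu\in\mathcal C(\delta)$, since a priori these words have widely varying lengths and $\underline c_k$ may decay fast. This is exactly where hypothesis \eqref{condition} enters: from $\|D\varphi_\bu\|\ge C^{-1}\delta\,\underline c_{|\bu|}$ and $\log\underline c_k=\epsilon_k\log M_k$ with $\epsilon_k\to0$, one obtains $\log M_{|\bu|}\ge(\log\delta-\log C)/(1-\epsilon_{|\bu|})$, whence $|\log M_{|\bu|}|=O(|\log\delta|)$ and therefore $|\log\underline c_{|\bu|}|=\epsilon_{|\bu|}|\log M_{|\bu|}|=o(|\log\delta|)$, because $|\bu|\ge k_\delta\to\infty$ as $\delta\to0$ forces $\sup_{\bu\in\mathcal C(\delta)}\epsilon_{|\bu|}\to0$. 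Hence $\eta(\delta)\to0$ and the entire comparison is dimension-neutral.
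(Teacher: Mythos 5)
Your proposal is correct and follows essentially the same route as the paper: your one-sided mesh-cube versus cut-set cylinder comparison is the paper's Lemma \ref{u<Q}, your use of \eqref{condition} to show $\underline c_{|\bu|}=\delta^{o(1)}$ uniformly over $\mathcal{C}(\delta)$ is the paper's Lemma \ref{condition app} (the paper trades $t$ for a slightly larger $t'$ where you carry an explicit error exponent $\eta(\delta)\to 0$), and the final limsup/liminf bookkeeping is identical; you additionally supply the elementary $\le d$ bound, which the paper leaves implicit. The only caveat is that your advertised two-sided comparison $T_\delta\asymp\sum_{\bu\in\mathcal{C}(\delta)}\mu([\bu])^q$ without separation is an overstatement (the reverse direction genuinely needs the open set condition, as in the paper's Lemma \ref{u=Q}), but the inequalities you actually prove and use are exactly the correct one-sided ones for each range of $q$, so no gap results.
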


Due to the geometric properties of non-autonomous conformal sets, we are able to find the generalized $q$-dimensions formulas under OSC.
\begin{thm}\label{Dq E}
Let $\Phi$ be a NCIFS satisfying the open set condition and (1.10) and $\mu$ a positive finite Borel measure on $\Sigma^{\infty}$.  Let $\mu^\omega$ be the image measure of $\mu$ satisfying the bounded overlap condition. Then for all $q>0$,
$$
\underline{D}_q(\mu^\omega)=\underline{d}_q^*,  \qquad  \overline{D}_q(\mu^\omega)=\overline{d}_q^*,
$$
where $\underline{d}_q^*$ and $\overline{d}_q^*$ are given by \eqref{d*}.
\end{thm}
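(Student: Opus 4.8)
The plan is to pair Theorem~\ref{upper bound} with a matching lower bound. Since Theorem~\ref{upper bound} already gives $\underline{D}_q(\mu^\omega)\le\min\{\underline{d}_q^*,d\}$ and $\overline{D}_q(\mu^\omega)\le\min\{\overline{d}_q^*,d\}$, it suffices to prove $\underline{D}_q(\mu^\omega)\ge\underline{d}_q^*$ and $\overline{D}_q(\mu^\omega)\ge\overline{d}_q^*$; chaining these with the upper bounds forces each minimum to equal the corresponding jump point (so in particular $\underline d_q^*\le d$ and $\overline d_q^*\le d$) and yields the asserted equalities. Throughout I work with two finite-scale exponents attached to each scale $\delta$: the finite-scale $q$-dimension $t(\delta)=\dfrac{\log\sum_{\bu\in\mathcal{C}(\delta)}\mu([\bu])^q}{(q-1)\log\delta}$, and the finite-scale pressure root $r(\delta)$, defined as the unique solution of $\sum_{\bu\in\mathcal{C}(\delta)}\|D\varphi_\bu\|^{(1-q)r}\mu([\bu])^q=1$ (uniqueness holds because the left side is strictly monotone in $r$).

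First I would pass from the mesh-cube sum in \eqref{Dq} to the symbolic sum. Using the open set condition \eqref{def_OSC}, the conformal bounded-distortion estimates furnished by (iv)--(v) (which give $\diam J_\bu\asymp\|D\varphi_\bu\|$ and an inscribed ball of comparable radius), and the bounded overlap condition \eqref{condition ssc} (which gives $\mu^\omega(J_\bu)\asymp\mu([\bu])$), I split $\mu^\omega(Q)$ over the sets $J_\bu$, $\bu\in\mathcal{C}(\delta)$, meeting a given $\delta$-cube $Q$ and apply the elementary inequalities $(\sum_i a_i)^q\le n^{q-1}\sum_i a_i^q$ for $q\ge1$ and $(\sum_i a_i)^q\le\sum_i a_i^q$ for $0<q<1$, in both directions. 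This produces
\[
\sum_{Q\in\mathcal{M}_\delta}\mu^\omega(Q)^q=e^{o(|\log\delta|)}\sum_{\bu\in\mathcal{C}(\delta)}\mu([\bu])^q .
\]
The one source of unbounded multiplicity is that the $J_\bu$ need not be of comparable size; a volume/packing estimate bounds the number of them meeting $Q$ by $(\hat c_\delta)^{-d}$, where $\hat c_\delta=\min\{\underline c_m:k_\delta\le m\le K_\delta\}$, with $K_\delta$ the largest word-length occurring in $\mathcal{C}(\delta)$ and $\underline c_m$, $M_m$ as in \eqref{Mk}. Condition \eqref{condition} forces $|\log\hat c_\delta|=o(|\log\delta|)$, so this factor is subexponential and disappears after dividing by $(q-1)\log\delta$. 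Hence $\underline{D}_q(\mu^\omega)=\liminf_{\delta\to0}t(\delta)$ and $\overline{D}_q(\mu^\omega)=\limsup_{\delta\to0}t(\delta)$.

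Next I would identify these limits with the jump points in two steps. Since $\delta\hat c_\delta\le\|D\varphi_\bu\|\le\delta$ for every $\bu\in\mathcal{C}(\delta)$, comparing the defining relations for $t(\delta)$ and $r(\delta)$ gives $r(\delta)\le t(\delta)\le r(\delta)\bigl(1+|\log\hat c_\delta|/|\log\delta|\bigr)$; a packing count bounds $t(\delta)$, hence $r(\delta)$, by $d+o(1)$, so $|t(\delta)-r(\delta)|\le r(\delta)\,|\log\hat c_\delta|/|\log\delta|\to0$ by \eqref{condition}. Thus $\liminf t(\delta)=\liminf r(\delta)$ and $\limsup t(\delta)=\limsup r(\delta)$. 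For the second step, observe that because $\|D\varphi_\bu\|\le\delta$ the function $f_\delta(t):=\frac{\mathrm{sgn}(1-q)}{k_\delta}\log\sum_{\bu\in\mathcal{C}(\delta)}\|D\varphi_\bu\|^{t(1-q)}\mu([\bu])^q$, so that $\overline P_\mu(t,q)=\limsup_{\delta\to0}f_\delta(t)$ and $\underline P_\mu(t,q)=\liminf_{\delta\to0}f_\delta(t)$ as in \eqref{def_GULPF}, is strictly decreasing in $t$ with $|f_\delta'|\ge|1-q|\,|\log\delta|/k_\delta$, and $|\log\delta|/k_\delta\ge$ a positive constant for small $\delta$ (from $\|D\varphi_{\bu^*}\|>\delta$ together with \eqref{def_uccdn}). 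A slope-bounded-below argument then shows $\inf\{t:\overline P_\mu(t,q)<0\}=\limsup r(\delta)$ and $\inf\{t:\underline P_\mu(t,q)<0\}=\liminf r(\delta)$, i.e. $\overline d_q^*=\limsup r(\delta)$ and $\underline d_q^*=\liminf r(\delta)$ by \eqref{d*}. Concatenating the three identifications yields $\underline D_q(\mu^\omega)=\underline d_q^*$ and $\overline D_q(\mu^\omega)=\overline d_q^*$. The case $q=1$ is handled along identical lines, replacing the $q$-sums by the entropy sums of \eqref{def_GULPF} and using the concavity of $x\mapsto -x\log x$ in the comparison step.

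The hard part is the genuinely non-autonomous feature that the cylinder sets $J_\bu$, $\bu\in\mathcal{C}(\delta)$, range in size from $\asymp\delta$ down to $\asymp\delta\hat c_\delta$, in contrast with the self-conformal case where they are all comparable to $\delta$. This single fact is responsible both for the unbounded geometric multiplicity in the cube-to-cylinder comparison and for the mismatch between the scale normalization $\log\delta$ in \eqref{Dq} and the level normalization $k_\delta$ in \eqref{def_GULPF}. Everything hinges on the estimate $|\log\hat c_\delta|=o(|\log\delta|)$, which simultaneously renders the multiplicity subexponential and collapses the gap $|t(\delta)-r(\delta)|$ to $0$; deriving it from \eqref{condition} requires the auxiliary comparison $|\log\delta|\asymp|\log M_{K_\delta}|$, which itself uses \eqref{condition} to control the single-step jumps of $M_k$. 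Verifying these two estimates carefully is where I expect the main work to lie.
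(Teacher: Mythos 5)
Your proposal is correct and follows essentially the same route as the paper: the upper bound is quoted from Theorem \ref{upper bound}, and the lower bound rests on exactly the ingredients the paper uses, namely the two-sided comparison between $\sum_{Q\in\mathcal{M}_\delta}\mu^\omega(Q)^q$ and $\sum_{\bu\in\mathcal{C}(\delta)}\mu([\bu])^q$ under OSC and BOC with multiplicity $\underline c_\delta^{-d}$ controlled by a volume/packing argument (the paper's Lemmas \ref{finite intersection} and \ref{u=Q}), the exponent shift between $\|D\varphi_\bu\|$ and $\delta$ governed by \eqref{condition} (Lemma \ref{condition app}), and the estimate $\log\underline c_\delta/\log\delta\to 0$. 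Your intermediate finite-scale exponents $t(\delta)$ and $r(\delta)$ and the slope-bounded-below identification of $\underline d_q^*$ and $\overline d_q^*$ with $\liminf r(\delta)$ and $\limsup r(\delta)$ are a (correct, and arguably cleaner) repackaging of the paper's direct argument with the pressure functions, not a genuinely different method.
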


\begin{rem}
If NCIFS $\boldsymbol{\Phi}$ satisfies the SSC, then the condition \eqref{condition ssc} holds.
\end{rem}

Alternatively, one can often work with a simpler form of the pressure function defined directly on the sequence of levels $k$. We define upper and lower pressure functions of $\mu$ respectively by  
\begin{equation}\label{def_ULPF}
\begin{split}
\overline P^\mu(t, q)=\limsup_{k\to \infty} \mbox{sgn}(1-q)\frac{1}{k}\log \sum_{\mathbf{u}\in\Sigma^k}\|D\varphi_\mathbf{u}\|^{t(1-q)}\mu([\bu])^q ,  \textit{  $q>0$, $q\not=1$}   
\\
\underline P^\mu(t, q)= \liminf_{k\to \infty} \mbox{sgn}(1-q)\frac{1}{k}\log \sum_{\mathbf{u}\in\Sigma^k}\|D\varphi_\mathbf{u}\|^{t(1-q)}\mu([\bu])^q ,  \textit{  $q>0$, $q\not=1$}   
\end{split}
\end{equation}    
If $\overline P^\mu(t, q)=\underline P^\mu(t, q)$, we call the common value $P^\mu(t, q)$  the pressure function.
We write their jump points respectively as
\begin{equation}\label{dl}
\begin{split}
\overline{d}_q=\inf\{t:\overline P^\mu(t, q)<0\}=\sup\{t:\overline P^\mu(t, q)>0\},  \\
\underline{d}_q=\inf\{t:\underline P^\mu(t, q)<0\}=\sup\{t:\underline P^\mu(t, q)>0\}.
\end{split}
\end{equation}  

For $q>0$ and $q\neq 1$, we are able to show that the generalized $q$-dimensions are given by $\underline{d}_q$ and $\overline{d}_q$. 
\begin{thm}\label{simple}
Given NCIFS $\boldsymbol{\Phi}$ satisfying OSC and \eqref{condition}.  Let $\mu$ be a  positive  finite Borel measure on $\Sigma^\infty$. If the image measure $\mu^\omega$  of $\mu$ satisfies the bounded overlap condition, then for $q>1$,
$$
\underline{D}_q(\mu^\omega)=\underline{d}_q=\underline{d}_q^*,
$$
and for $0<q<1$,
$$
\overline{D}_q(\mu^\omega)=\overline{d}_q =\overline{d}_q^*.
$$
\end{thm}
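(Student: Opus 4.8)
The starting point is the reduction furnished by Theorem~\ref{Dq E}. Under the present hypotheses (OSC, \eqref{condition} and the bounded overlap condition \eqref{condition ssc}) that theorem already identifies the generalized $q$-dimensions with the jump points of the \emph{cut-set} pressure, namely $\underline D_q(\mu^\omega)=\underline d_q^*$ and $\overline D_q(\mu^\omega)=\overline d_q^*$ for every $q>0$. Consequently the only new content of Theorem~\ref{simple} is the purely symbolic identity relating the two families of pressure functions: I would show $\underline d_q=\underline d_q^*$ for $q>1$ and $\overline d_q=\overline d_q^*$ for $0<q<1$. The four numbers $\underline d_q,\overline d_q,\underline d_q^*,\overline d_q^*$ are defined through \eqref{dl} and \eqref{d*} solely in terms of $\|D\varphi_\mathbf{u}\|$ and $\mu([\mathbf{u}])$, so this last step needs neither the separation nor the overlap hypotheses and can be carried out entirely on $\Sigma^\infty$; the geometry of $\mu^\omega$ on $\mathbb{R}^d$ enters only through the already-cited Theorem~\ref{Dq E}.

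To compare the two pressures I would first recast each jump point as a limit of \emph{pressure roots}. For fixed $q$ let $h(k)$ be the unique $t$ solving $\sum_{\mathbf{u}\in\Sigma^k}\|D\varphi_\mathbf{u}\|^{t(1-q)}\mu([\mathbf{u}])^q=1$, and let $h^*(\delta)$ be the analogous root for the cut-set sum over $\mathcal{C}(\delta)$. Differentiating the logarithm of either sum in $t$ gives, up to the factor $|1-q|$, a weighted average of $|\log\|D\varphi_\mathbf{u}\||$; since \eqref{def_uccdn} yields $\|D\varphi_\mathbf{u}\|\le c^{|\mathbf{u}|}$ and hence $|\log M_k|\ge k|\log c|$ (respectively $|\log\delta|\ge(k_\delta-1)|\log c|$, cf.\ \eqref{Mk}), the slope in $t$ is bounded below by a positive multiple of $k$ (respectively $k_\delta$). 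This uniformly controlled strict monotonicity converts the sign-change definitions into the clean statements $\underline d_q=\liminf_k h(k)$, $\overline d_q=\limsup_k h(k)$, $\underline d_q^*=\liminf_\delta h^*(\delta)$ and $\overline d_q^*=\limsup_\delta h^*(\delta)$, the normalizations $1/k$ and $1/k_\delta$ being immaterial for the \emph{location} of the root even when the pressures themselves are infinite. Everything then reduces to matching the liminf (for $q>1$) and limsup (for $0<q<1$) of the level roots $h(k)$ with those of the cut roots $h^*(\delta)$.

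The bridge between levels and scales is the refinement identity $\Sigma^{K_\delta}=\bigsqcup_{\mathbf{u}\in\mathcal{C}(\delta)}\{\mathbf{u}\mathbf{w}:\mathbf{w}\in\Sigma_{|\mathbf{u}|+1}^{K_\delta}\}$, where $K_\delta$ is the maximal word length in $\mathcal{C}(\delta)$, together with the matching scale $\delta=M_k$ (for which $K_\delta=k$). On each block I would estimate the level summand against the cut summand using the quasi-multiplicativity $C^{-1}\|D\varphi_\mathbf{u}\|\,\|D\varphi_\mathbf{w}\|\le\|D\varphi_{\mathbf{u}\mathbf{w}}\|\le\|D\varphi_\mathbf{u}\|\,\|D\varphi_\mathbf{w}\|$, which follows from the uniform bounded-distortion hypothesis (v) and the chain rule for conformal maps, and the one-sided power-mean inequality $\sum_{\mathbf{w}}\mu([\mathbf{u}\mathbf{w}])^q\le\mu([\mathbf{u}])^q$, valid for $q>1$ (with the reverse inequality for $0<q<1$). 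It is exactly this power-mean step that forces the split into $q>1$ for the lower and $0<q<1$ for the upper dimensions. The residual factors are the cardinality of the gap words $\mathbf{w}$ and their derivative norms, which I would control by \eqref{condition}: in the equivalent form $\log(M_k/M_{k-1})/\log M_k\to0$ (a direct consequence of \eqref{condition} since $|\log(M_k/M_{k-1})|\le|\log\underline{c}_k|+\log C$), this condition yields both the scale-level asymptotic $\log\delta\sim\log M_{K_\delta}$ and the negligibility of the single-step contractions $\underline{c}_k$ on the logarithmic scale of $M_k$.

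The main obstacle is precisely this control of the gap words. Under \eqref{condition} the cut set $\mathcal{C}(\delta)$ need not be ``flat'': its minimal and maximal word lengths $k_\delta$ and $K_\delta$ may differ by a factor bounded away from $1$, so one cannot simply sandwich a level sum between two neighbouring level sums, and the gap words carry widely varying scales and masses that can inflate a level sum far from its root. The delicate claim is that their net effect does not move the liminf (respectively limsup) of the roots. My plan is to absorb the measure spreading by the one-sided power-mean bound and to absorb the accompanying derivative gain together with the combinatorial factor $\#\{\mathbf{w}\}$ using \eqref{condition}: because each additional level alters $\log M_k$ only by $o(\log M_k)$, the discrepancy it produces in $h(k)-h^*(M_k)$ should be $o(1)$ along the extremal subsequences, so the two families of roots share the same liminf and limsup. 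Combining this with the root reformulation of the second paragraph and with Theorem~\ref{Dq E} then delivers the asserted chain of equalities $\underline D_q(\mu^\omega)=\underline d_q=\underline d_q^*$ for $q>1$ and $\overline D_q(\mu^\omega)=\overline d_q=\overline d_q^*$ for $0<q<1$.
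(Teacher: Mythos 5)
Your overall architecture is sound and matches the paper's up to the last step: Theorem \ref{Dq E} disposes of $\underline D_q(\mu^\omega)=\underline d_q^*$ and $\overline D_q(\mu^\omega)=\overline d_q^*$, and what remains is the purely symbolic identity $\underline d_q=\underline d_q^*$ for $q>1$ and $\overline d_q=\overline d_q^*$ for $0<q<1$; your reformulation of the jump points as $\liminf$/$\limsup$ of pressure roots is also fine, since the slopes in $t$ are uniformly bounded away from $0$. The genuine gap is in the bridge between levels and scales. The single-scale refinement $\Sigma^{k}=\bigsqcup_{\bu\in\mathcal C(M_k)}\{\bu\mathbf{w}:\mathbf{w}\in\Sigma_{|\bu|+1}^{k}\}$ forces you to pay, for each $\bu$ of length $j$, a factor $\|D\varphi_{\mathbf w}\|^{t(1-q)}$ over the gap word, and the only available lower bound for $\|D\varphi_{\mathbf w}\|$ is of order $C^{-(k-j)}\prod_{i=j+1}^{k}\underline c_i$. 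Condition \eqref{condition} controls each single factor $|\log\underline c_i|$ relative to $|\log M_i|$, but not the product over the $k-j$ gap levels, and $k-k_{M_k}$ is in general comparable to $k$. Already for the autonomous system $\Phi_k=\{x/2,\ x/100\}$ one has $K_\delta/k_\delta\to\log 100/\log 2$ (where $K_\delta$ is the maximal word length in $\mathcal C(\delta)$), condition \eqref{condition} holds trivially, and the word $22\cdots 2\in\Sigma^k$ contributes $100^{\,kt(q-1)}$ to the level sum, exponentially larger than $M_k^{t(1-q)}=2^{\,kt(q-1)}$. The per-word loss is thus $e^{\Theta(k)}$ and cannot be absorbed by the $t\mapsto t'$ gain $e^{-(t-t')(q-1)|\log M_k|}$ unless $t-t'$ exceeds a fixed positive multiple of $t$; your assertion that the discrepancy in $h(k)-h^*(M_k)$ ``should be $o(1)$'' is exactly the step that fails, because summing $o(|\log M_i|)$ over linearly many levels $i$ need not be $o(|\log M_k|)$.

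The fix is to abandon the single-scale comparison: a word of $\Sigma^k$ with very small derivative must be charged to a cut set at a correspondingly small scale, not to $\mathcal C(M_k)$. This is what the paper does in Proposition \ref{cut to qici}, via the intermediate quantity $\sup\{t:\sum_{k}\sum_{\bu\in\Sigma^k}\|D\varphi_\bu\|^{t(1-q)}\mu([\bu])^q<\infty\}$ and a circular chain of three inequalities. The two substantive steps are: (a) every cut set is a subset of $\Sigma^*$, so convergence of the full series bounds all cut-set sums simultaneously; and (b) $\Sigma^*\subset\bigcup_{m\ge 0}\mathcal C(\rho^m)$ for any $c<\rho<1$, so geometric decay of the cut-set sums, after sacrificing $t\to t'$ to produce the summable factor $\rho^{m(t'-t)(1-q)}$, makes the full series converge and hence keeps all level sums bounded. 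No gap words appear and the power-mean inequality is not needed. If you wish to keep your root formulation, the correct single-$k$ comparison is $S_k(t')\le\sum_{m\ge m_1(k)}S_{\mathcal C(\rho^m)}(t')$ with $\rho^{m_1(k)}\asymp M_k$, i.e.\ cover $\Sigma^k$ by the whole family of cut sets it meets rather than refining it over one of them.
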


Given NCIFS $\boldsymbol{\Phi}=\{\Phi_k\}_{k=1}^\infty$, if $\Phi_k=\Phi_1$ for all $k\geq 1$, we call  $\boldsymbol{\Phi}$ an \emph{autonomous conformal iterated function system}, and $E$ an \emph{autonomous conformal set}. Note that non-autonomous iterated function systems may also be regarded as a generalization of the iterated  function system. However, for each $\bu\in\Sigma^\infty$, the choice of translations $\Psi_\bu$ in a non-autonomous iterated function system is very flexible. Given an autonomous conformal iterated function system $\{\Phi\}$, the corresponding autonomous conformal set may be not a self-conformal set. See Figure \ref{fig:nonautonomous_set}. For an autonomous conformal set, if all translations  $\omega_\bu$ are $0$, then the system reduces to a standard (autonomous) conformal IFS, and its attractor is a self-conformal set.

\begin{figure}[htbp]\label{fig:nonautonomous_set}
\centering
\begin{tikzpicture}[x=13cm, y=-0.9cm, scale=0.9]

\draw[very thick] (0,0) -- (1,0);
\node[above] at (0.5,0) {$J = [0,1]$};

\draw[very thick] (0,1) -- (0.5,1);   
\draw[very thick] (0.6,1) -- (0.933,1); 
\node[above] at (0.25,1) {$\Psi_1(J)=J_1$};
\node[above] at (0.7665,1) {$\Psi_2(J)=J_2$};

\draw[very thick] (0.05,2) -- (0.216,2);   
\draw[very thick] (0.226,2) -- (0.476,2); 
\draw[very thick] (0.61,2) -- (0.776,2); 
\draw[very thick] (0.79,2) -- (0.901,2); 
\node[above] at (0.133,2) {$J_{12}$};
\node[above] at (0.351,2) {$J_{11}$};
\node[above] at (0.693,2) {$J_{21}$};
\node[above] at (0.8455,2) {$J_{22}$};

\draw[very thick] (0.05,3) -- (0.105,3);     
\draw[very thick] (0.11,3) -- (0.193,3); 
\draw[very thick] (0.226,3) -- (0.309,3);  
\draw[very thick] (0.331,3) -- (0.456,3); 
\draw[very thick] (0.61,3) -- (0.693,3);  
\draw[very thick] (0.738,3) -- (0.776,3); 
\draw[very thick] (0.8,3) -- (0.837,3); 
\draw[very thick] (0.84,3) -- (0.895,3); 
\node[above] at (0.0775,3) {$J_{122}$};
\node[below] at (0.1515,3) {$J_{121}$};
\node[above] at (0.2675,3) {$J_{112}$};
\node[below] at (0.3935,3) {$J_{111}$};
\node[above] at (0.6515,3) {$J_{211}$};
\node[below] at (0.757,3) {$J_{212}$};
\node[above] at (0.8185,3) {$J_{222}$};
\node[below] at (0.8675,3) {$J_{221}$};

\end{tikzpicture}

\caption{Autonomous conformal set  at level 1, 2, 3, with   $J=[0,1]$ and $\Phi_k=\{\varphi_{1}=\frac{1}{2}x, \varphi_{2}=\frac{1}{3}x\}$.}
\label{fig:nonautonomous_set} 
\end{figure}
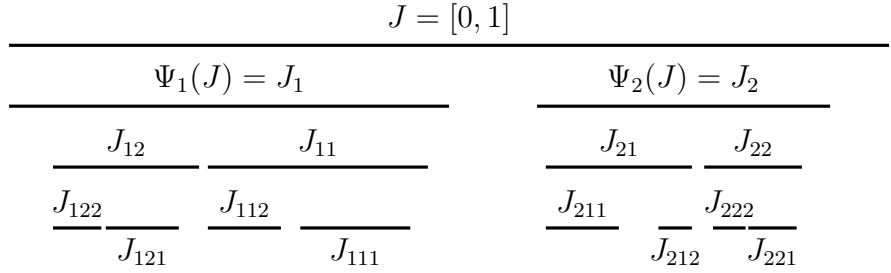

For  $\mathbf{u}=u_1u_2 \ldots\in\Sigma^\infty $, define the shift map $\sigma$ by $\sigma(u_1 u_2 \ldots)=u_2 u_3 \ldots$.  For $f:\Sigma^\infty\to\mathbb{R}$,  we write 
$$
S_kf(\mathbf{u})=\sum_{j=0}^{k-1}f(\sigma^j(\mathbf{u})).  
$$
A Borel probability measure $\mu$ on $\Sigma^\infty$ is a Gibbs measure if there exists a continuous $f:\Sigma^\infty\to\mathbb{R}$,  a number $P(f)$ termed the pressure of $f$ and $a>0$  such that
\begin{equation}\label{Gibbs}
a\le\frac{\mu([\mathbf{u}|_k])}{exp(-kP(f)+S_kf(\mathbf{u}))}\le\frac{1}{a},
\end{equation}
 for all $\mathbf{u} \in\Sigma^\infty$ and all $k$. Thus the pressure is given by
$$
P(f)=\lim_{k\to\infty}\frac{1}{k}\log \sum_{\mathbf{u}\in\Sigma^k}e^{S_kf(\mathbf{v})},
$$
where $\mathbf{v}$ is any element of $\Sigma^\infty$ such that $\mathbf{v}|_k=\mathbf{u}$. By the variational principle, if $f$ satisfies $|f(\mathbf{u})-f(\mathbf{v})|\le cd(\mathbf{i},\mathbf{j})^\epsilon$ for some $\epsilon>0$,  then there exists an invariant Gibbs measure $\mu$ satisfying \eqref{Gibbs} for some $P(f)$.  See Bowen \cite{Bowen}.

We consider the generalized $q$-dimension of the image measure of Gibbs measures supported on $\Sigma^\infty$. Given an  ACIFS, the pressure function $P^\mu(t, q)$ exists, see Lemma \ref{lem_Adq}, and $\underline{d}_q=\overline{d}_q=d_q$.
\begin{thm}\label{thm_HNCIFS}
Given an  ACIFS $\boldsymbol{\Phi}$ satisfying the open set condition. Let $\mu$ be a Gibbs measure on $\Sigma^\infty$ and $\mu^\omega$ be the image measure of $\mu$ defined by \eqref{mu} satisfying bounded overlap condition. Then we have 
$$
{D}_q(\mu^\omega)=d_q,
$$
for $q>0, q\neq 1$. 
\end{thm}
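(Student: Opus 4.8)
The plan is to reduce the statement to a comparison of critical values and then invoke Theorem \ref{Dq E}. First note that for an ACIFS the quantities in \eqref{Mk} satisfy $\underline c_k=\underline c_1$ for every $k$ (the same family is used at each step), while the bounded distortion hypothesis (v) gives $M_k\asymp\bar c_1^{\,k}$ with $\bar c_1=\max_i\|D\varphi_{1,i}\|<1$, so $\log M_k\to-\infty$ linearly and $\log\underline c_k/\log M_k\to 0$. Hence \eqref{condition} holds automatically, and $\boldsymbol\Phi$ meets the hypotheses of Theorems \ref{upper bound}, \ref{Dq E} and \ref{simple}. By Lemma \ref{lem_Adq} the pressure $P^\mu(t,q)$ exists, so $\underline d_q=\overline d_q=d_q$, and $d_q$ is the unique zero of the continuous, strictly monotone map $t\mapsto P^\mu(t,q)$. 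Since Theorem \ref{Dq E} gives $\underline D_q(\mu^\omega)=\underline d_q^*$ and $\overline D_q(\mu^\omega)=\overline d_q^*$, it suffices to prove $\underline d_q^*=\overline d_q^*=d_q$.

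I would obtain this by showing that the asterisk pressure functions of \eqref{def_GULPF} vanish exactly at $t=d_q$. Two quasi-multiplicativity facts drive the argument. By the chain rule together with (v) one has $\|D\varphi_{\bu\bv}\|\asymp\|D\varphi_\bu\|\,\|D\varphi_\bv\|$; and because $\mu$ is a Gibbs measure with H\"older potential, the additivity $S_{k+l}f=S_kf+S_lf\circ\sigma^k$ yields $\mu([\bu\bv])\asymp\mu([\bu])\mu([\bv])$, with constants independent of $\bu,\bv$. Writing $g(\bu)=\|D\varphi_\bu\|^{t(1-q)}\mu([\bu])^q$, $Z_k(t,q)=\sum_{\bu\in\Sigma^k}g(\bu)$ and $Z(\delta)=\sum_{\bu\in\mathcal C(\delta)}g(\bu)$, these facts make $\log Z_k$ almost additive, which underlies Lemma \ref{lem_Adq}; at the critical exponent they give $Z_k(d_q,q)=e^{o(k)}$.

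Next I would compare $Z(\delta)$ with fixed generations at $t=d_q$. Because the system is autonomous, every $\bu\in\mathcal C(\delta)$ satisfies $C^{-1}\underline c_1\delta<\|D\varphi_\bu\|\le\delta$, so its length lies in a window $[k_\delta,K_\delta]$ with $K_\delta/k_\delta\to\log\underline c_1/\log\bar c_1$ bounded. For the upper bound, $\{\bu\in\mathcal C(\delta):|\bu|=k\}\subseteq\Sigma^k$ and $g\ge0$ give $Z(\delta)\le\sum_{k=k_\delta}^{K_\delta}Z_k(d_q,q)\le(K_\delta-k_\delta+1)e^{o(K_\delta)}=e^{o(k_\delta)}$. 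For the lower bound, the autonomous partition $\Sigma^{K_\delta}=\bigsqcup_{\bu\in\mathcal C(\delta)}\bu\,\Sigma^{K_\delta-|\bu|}$ and quasi-multiplicativity give $Z_{K_\delta}(d_q,q)\asymp\sum_{\bu\in\mathcal C(\delta)}g(\bu)Z_{K_\delta-|\bu|}(d_q,q)\le e^{o(K_\delta)}Z(\delta)$, whence $Z(\delta)\ge e^{-o(k_\delta)}$. Together these yield $k_\delta^{-1}\log Z(\delta)\to0$, i.e. $\overline P_\mu(d_q,q)=\underline P_\mu(d_q,q)=0$ (the factor $\mathrm{sgn}(1-q)$ handles $q>1$ and $0<q<1$ uniformly). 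As the integrand $\|D\varphi_\bu\|^{t(1-q)}$ is monotone in $t$ with fixed sign, the asterisk pressures are strictly monotone, so this zero pins both jump points \eqref{d*} at $t=d_q$, giving $\underline d_q^*=\overline d_q^*=d_q$ and, through Theorem \ref{Dq E}, the desired $D_q(\mu^\omega)=d_q$.

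The main obstacle is the two-sided estimate at the critical exponent, and in particular the control of the length window $[k_\delta,K_\delta]$ with $K_\delta\asymp k_\delta$. It is here that the autonomous hypothesis is indispensable: for a genuinely non-autonomous system the suffix alphabets depend on the position $|\bu|$, so the factorization $\Sigma^{K_\delta}=\bigsqcup_\bu\bu\,\Sigma^{K_\delta-|\bu|}$ breaks down and the Gibbs estimate $\mu([\bu\bv])\asymp\mu([\bu])\mu([\bv])$ is unavailable; this is precisely why the existence in Lemma \ref{lem_Adq}, and hence the equality $D_q(\mu^\omega)=d_q$, are special to the autonomous case. A minor point to check along the way is that $d_q\in[0,d]$ is finite, so that $Z_k(d_q,q)=e^{o(k)}$ is meaningful.
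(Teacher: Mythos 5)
Your proposal is correct and shares the paper's overall skeleton --- reduce to Theorem \ref{Dq E} plus the identification $\underline d_q^*=d_q=\overline d_q^*$, with Lemma \ref{lem_Adq} supplying the existence and the zero of $P^\mu(\cdot,q)$ --- but you prove the identification by a different device. The paper works \emph{off} the critical exponent: Proposition \ref{prop_lme} shows that for $s$ on one side of $d_q$ the full series $\sum_k\sum_{\bu\in\Sigma^k}\|D\varphi_\bu\|^{s(1-q)}\mu([\bu])^q$ converges, while on the other side the minimum of $\sum_{\bu\in\mathcal C}$ over all cut sets with $k_{\mathcal C}\ge k$ tends to infinity; Proposition \ref{cut to qici A} then squeezes $\overline d_q^*\le d_q\le\underline d_q^*$ by contradiction, comparing $\mathcal C(\delta_m)$ against these two regimes. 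You instead work \emph{at} $t=d_q$ and show directly that $k_\delta^{-1}\log Z(\delta)\to0$, via the sandwich $Z(\delta)\le\sum_{k=k_\delta}^{K_\delta}Z_k$ and the factorization $Z_{K_\delta}\asymp\sum_{\bu\in\mathcal C(\delta)}g(\bu)Z_{K_\delta-|\bu|}$, then let the strict monotonicity of Lemma \ref{m d} pin the jump points. Both arguments rest on the same quasi-multiplicativity (Lemma \ref{cor_subMul} plus the Gibbs inequality \eqref{Gibbs}) and on the bounded length window $K_\delta\asymp k_\delta$, which is where autonomy enters. Your route is arguably more economical, since it bypasses Proposition \ref{prop_lme} entirely and yields the slightly stronger conclusion $\overline P_\mu(d_q,q)=\underline P_\mu(d_q,q)=0$ rather than only locating the sign change; the paper's convergence/divergence dichotomy is more robust in settings where one cannot evaluate the pressure exactly at criticality. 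Your explicit verification that \eqref{condition} holds automatically for an ACIFS (so that Theorem \ref{Dq E} applies) is a point the paper leaves implicit and is worth keeping.
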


Let $\mu$ be the Bernoulli measure on $\Sigma^\infty$ defined by \eqref{b m} with positive probability vector $(p_1, \ldots, p_n)$. Since $\mu$ is a Gibbs measure on $\Sigma^\infty$,   the following is an immediate consequence of Theorem \ref{thm_HNCIFS}.

\begin{cor}
Given an ACIFS $\boldsymbol{\Phi}$ satisfying open set condition.  Let $\mu$ be a Bernoulli measure on $\Sigma^\infty$ defined by \eqref{b m}. Suppose the image measure  $\mu^\omega$  of $\mu$ satisfies  bounded overlap condition. Then for $q>0, q\not=1$,  we have
$$
{D}_q(\mu^\omega)=d_q.
$$
\end{cor}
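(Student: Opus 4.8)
The plan is to reduce everything to Theorem \ref{thm_HNCIFS} by exhibiting the Bernoulli measure $\mu$ as a Gibbs measure on $\Sigma^\infty$ in the sense of \eqref{Gibbs}. The whole content of the corollary is this verification; once it is done, the conclusion $D_q(\mu^\omega)=d_q$ for $q>0$, $q\neq 1$ is an immediate application of the theorem, since the hypotheses (ACIFS, OSC, and the bounded overlap condition on $\mu^\omega$) are exactly those assumed here.

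First I would produce the potential. Because the system is autonomous we have $p_{k,i}=p_i$ for every level $k$, so I define $f:\Sigma^\infty\to\mathbb{R}$ by $f(\mathbf{u})=\log p_{u_1}$, depending only on the first symbol. Then I compute the Birkhoff sums: by the definition of the shift $\sigma$ and of $S_kf$,
$$
S_kf(\mathbf{u})=\sum_{j=0}^{k-1}\log p_{u_{j+1}}=\log\big(p_{u_1}p_{u_2}\cdots p_{u_k}\big)=\log\mu([\mathbf{u}|_k]),
$$
using \eqref{b m} in the last equality. Next I evaluate the pressure. Since for any $\mathbf{v}$ with $\mathbf{v}|_k=\mathbf{u}$ one has $e^{S_kf(\mathbf{v})}=p_{u_1}\cdots p_{u_k}$, the partition sum telescopes into a product of probability vectors,
$$
\sum_{\mathbf{u}\in\Sigma^k}e^{S_kf(\mathbf{v})}=\Big(\sum_{i=1}^{n}p_i\Big)^{k}=1,
$$
so that $P(f)=\lim_{k\to\infty}\tfrac1k\log 1=0$.

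With $P(f)=0$ and $S_kf(\mathbf{u})=\log\mu([\mathbf{u}|_k])$, the ratio in \eqref{Gibbs} becomes
$$
\frac{\mu([\mathbf{u}|_k])}{\exp\big(-kP(f)+S_kf(\mathbf{u})\big)}=\frac{\mu([\mathbf{u}|_k])}{\mu([\mathbf{u}|_k])}=1,
$$
so \eqref{Gibbs} holds with the optimal constant $a=1$, uniformly in $\mathbf{u}$ and $k$. It remains only to note that $f$ is admissible: it is locally constant (depending solely on $u_1$), hence continuous, and it satisfies the H\"older estimate $|f(\mathbf{u})-f(\mathbf{v})|\le c\,d(\mathbf{u},\mathbf{v})^\epsilon$ trivially, since $f(\mathbf{u})=f(\mathbf{v})$ whenever $u_1=v_1$, while if $u_1\neq v_1$ then $d(\mathbf{u},\mathbf{v})=1$ and the left-hand side is bounded by $\max_{i,j}|\log p_i-\log p_j|$. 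Therefore $\mu$ is a Gibbs measure, and Theorem \ref{thm_HNCIFS} yields $D_q(\mu^\omega)=d_q$ for all $q>0$, $q\neq 1$, completing the proof. I do not anticipate any genuine obstacle here: the argument is a direct verification, and the only point requiring care is matching the normalization $\sum_i p_i=1$ to the vanishing of the pressure, which is what makes the Gibbs constant equal to one.
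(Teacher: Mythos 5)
Your proposal is correct and follows exactly the route the paper intends: the paper itself only remarks ``since $\mu$ is a Gibbs measure\dots the following is an immediate consequence of Theorem \ref{thm_HNCIFS},'' and your verification (potential $f(\mathbf{u})=\log p_{u_1}$, $S_kf(\mathbf{u})=\log\mu([\mathbf{u}|_k])$, $P(f)=0$, Gibbs constant $a=1$, with positivity of the $p_i$ giving the H\"older bound) is precisely the standard computation being invoked.
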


The paper is organised as follows. In Section \ref{sec:GRFBD}, we give the properties of pressure functions and bounded distortion.
In Section \ref{sec:GQBM},  we study the generalized $q$-dimensions of Borel meausre and give the proofs of Theorem \ref{upper bound},  Theorem \ref{Dq E} and  Theorem \ref{simple}. In the final section, we study the generalized $q$-dimensions of Gibbs meausres for ACIFS and give the proof of Theorem \ref{thm_HNCIFS}.

\section{Generalized Pressure functions and bounded distortion}\label{sec:GRFBD}
In this section, we study the properties of pressure functions and bounded distortion which  are  useful to explore  the  non-autonomous conformal fractals.
From now on, we always  write $\boldsymbol{\Phi}$ for the non-autonomous conformal iterated function system.

Throughout, we use $C$ for a constant and write $Y \lesssim_t X$ to mean $Y \le C X$ for some constant $C > 0$ depending on $t$; similarly, $Y \gtrsim_t X$ means $Y \ge C X$ for some $C> 0$ depending on $t$. We write $Y \asymp_t X$ if both $Y \lesssim_t X$ and $Y \gtrsim_t X$ hold.

The following conclusion is straightforward. 
\begin{lem}\label{u contraction}
Given a $NCIFS$, for each integer $k$ and $u\in I_k$, 
$$
\|D\varphi_u\|\le c,
$$
where $c$ is given by \eqref{def_uccdn}.
\end{lem}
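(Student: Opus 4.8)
The plan is to read off the pointwise bound $\|D\varphi_u(x)\|\le c$ directly from the Lipschitz estimate \eqref{def_uccdn} via the definition of the derivative, and then pass to the supremum over $x\in J$. Fix an integer $k$ and $u\in I_k$, and write $\varphi=\varphi_u=\varphi_{k,u}$ for brevity; recall that by definition $\|D\varphi_u\|=\sup\{|D\varphi(x)|:x\in J\}$, where $|D\varphi(x)|$ denotes the operator norm of the derivative.

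First I would treat interior points. Fix $x\in\mbox{int}(J)$ and a unit vector $v\in\mathbb{R}^d$. Since $x$ lies in the open set $\mbox{int}(J)$, we have $x+tv\in J$ for all sufficiently small $t>0$, so \eqref{def_uccdn} applies to the pair $x+tv,\,x$ and gives $|\varphi(x+tv)-\varphi(x)|\le c\,|t|$. Dividing by $t$ and letting $t\to 0^+$, the left-hand side converges to $|D\varphi(x)v|$ because $\varphi$ is $C^1$ on $V$ by condition (iv); hence $|D\varphi(x)v|\le c$. Taking the supremum over unit vectors $v$ yields $\|D\varphi(x)\|\le c$ for every $x\in\mbox{int}(J)$.

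Next I would extend the bound from the interior to all of $J$, which is the only step requiring any care. Here I would invoke the standing hypothesis $\overline{\mbox{int}(J)}=J$: every $x\in J$ is the limit of a sequence $x_n\in\mbox{int}(J)$. Since $\varphi$ extends to a $C^1$ diffeomorphism of the open set $V\supset J$, the function $y\mapsto \|D\varphi(y)\|$ is continuous on $V$, so $\|D\varphi(x)\|=\lim_{n\to\infty}\|D\varphi(x_n)\|\le c$. Taking the supremum over $x\in J$ then gives $\|D\varphi_u\|=\sup_{x\in J}\|D\varphi(x)\|\le c$, as claimed. The main (indeed the only) obstacle is precisely this passage to boundary points of $J$, where the Lipschitz bound \eqref{def_uccdn} is still available but the segment argument is not; it is resolved exactly by the assumptions that $\varphi_u$ be $C^1$ on the enlarged open set $V$ and that $J=\overline{\mbox{int}(J)}$. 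Everything else is immediate from the definition of the operator norm of the derivative.
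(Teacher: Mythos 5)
Your proof is correct and follows essentially the same route as the paper's: both deduce the pointwise bound $\|D\varphi_u(x)\|\le c$ from the Lipschitz estimate \eqref{def_uccdn} by passing to a limit of difference quotients, using the $C^1$ extension to $V$ and the hypothesis $\overline{\mbox{int}(J)}=J$. The only cosmetic difference is that the paper invokes conformality to write $\|D\varphi_u(x)\|=\lim_{y\to x}|\varphi_u(y)-\varphi_u(x)|/|y-x|$ and evaluates this limit along a sequence in $J$ converging to $x$, whereas you take directional derivatives at interior points (which needs no conformality) and then reach the remaining points of $J$ by continuity of $y\mapsto\|D\varphi_u(y)\|$; both arguments are valid.
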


The following properties of generalized pressure functions are essential in the study of nonautonomous conformal sets.
\begin{lem}\label{m d}
Both  $\overline P_\mu(t, q)$ and  $\underline P_\mu(t, q)$  in \eqref{def_GULPF} are  monotonically  decreasing  in $t$. 

In particular, given $q>0$,   if $\overline P_\mu(t, q)$ and  $\underline P_\mu(t, q)$ is finite on an  interval $I$, then they are strictly decreasing on $I$,  convex  when $0<q<1$ and  concave   when $q>1$. Moreover $\overline{d}_q^*$ and $\underline{d}_q^*$ in \eqref{d*} are finite. 
\end{lem}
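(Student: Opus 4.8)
The plan is to reduce everything to the finite sums $S_\delta(t,q)=\sum_{\mathbf{u}\in\mathcal C(\delta)}\|D\varphi_\mathbf{u}\|^{t(1-q)}\mu([\bu])^q$ and to the functions $g_\delta(t):=\mathrm{sgn}(1-q)\,k_\delta^{-1}\log S_\delta(t,q)$, so that $\overline P_\mu(t,q)=\limsup_{\delta\to0}g_\delta(t)$ and $\underline P_\mu(t,q)=\liminf_{\delta\to0}g_\delta(t)$, and to prove each assertion for the $g_\delta$ first. \emph{Monotonicity.} For $\delta<1$ and $\mathbf{u}\in\mathcal C(\delta)$ we have $\|D\varphi_\mathbf{u}\|\le\delta<1$, so the factor $\|D\varphi_\mathbf{u}\|^{t(1-q)}$ is decreasing in $t$ when $0<q<1$ and increasing when $q>1$; after multiplying by $\mathrm{sgn}(1-q)/k_\delta$ each $g_\delta$ is decreasing in $t$, and taking $\limsup$ and $\liminf$ preserves this. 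For $q=1$ the summand is affine in $t$ with negative slope $k_\delta^{-1}\sum_\mathbf{u}\mu([\bu])\log\|D\varphi_\mathbf{u}\|$, giving the same conclusion.

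\emph{Strict monotonicity.} For $t_1<t_2$ and $0<q<1$, since $\|D\varphi_\mathbf{u}\|\le\delta$ I would factor $S_\delta(t_2,q)\le\delta^{(t_2-t_1)(1-q)}S_\delta(t_1,q)$, so $g_\delta(t_2)\le g_\delta(t_1)+(t_2-t_1)(1-q)\,k_\delta^{-1}\log\delta$. The decisive point is to bound $k_\delta^{-1}\log\delta$ away from $0$: by Lemma~\ref{u contraction} and the chain rule $\|D\varphi_\mathbf{u}\|\le c^{|\mathbf{u}|}$, and since $\|D\varphi_{\mathbf{u}^*}\|>\delta$ for $\mathbf{u}\in\mathcal C(\delta)$, one gets $k_\delta^{-1}\log\delta\le(1-k_\delta^{-1})\log c$; as each $\Sigma^n$ is finite with all $\|D\varphi_\mathbf{u}\|>0$, small $\delta$ forces $k_\delta>n$, hence $k_\delta\to\infty$ and $\limsup_{\delta\to0}k_\delta^{-1}\log\delta\le\log c<0$. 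This produces a uniform gap $\varepsilon>0$ with $g_\delta(t_2)\le g_\delta(t_1)-\varepsilon$ for all small $\delta$; passing to $\limsup$/$\liminf$ and using finiteness on $I$ to subtract gives strict decrease. The case $q>1$ is symmetric, and $q=1$ follows from the negative slope.

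\emph{Convexity/concavity.} For fixed $\delta$, writing $a_\mathbf{u}=\mu([\bu])^q$ and $b_\mathbf{u}=(1-q)\log\|D\varphi_\mathbf{u}\|$, the map $t\mapsto\log S_\delta(t,q)=\log\sum_\mathbf{u}a_\mathbf{u}e^{tb_\mathbf{u}}$ is convex (log-sum-exp, equivalently H\"older), so $g_\delta$ is convex for $0<q<1$ and concave for $q>1$. For the \emph{limsup}-type quantity ($\overline P_\mu$ when $0<q<1$, $\underline P_\mu$ when $q>1$) convexity/concavity passes to the limit, because $\limsup_{\delta\to0}g_\delta=\inf_{\eta>0}\sup_{0<\delta<\eta}g_\delta$ is a decreasing limit of suprema of convex functions, hence convex (and symmetrically for concavity). \textbf{The main obstacle} is the remaining \emph{liminf}-type quantity ($\underline P_\mu$ when $0<q<1$, $\overline P_\mu$ when $q>1$): a liminf of convex functions need not be convex, so the argument above does not transfer. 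To handle it I would exploit condition~\eqref{condition}, which forces $\max_{\mathbf{u}\in\mathcal C(\delta)}|\mathbf{u}|\asymp k_\delta$; since $g_\delta'(t)$ is $\mathrm{sgn}(1-q)k_\delta^{-1}$ times a weighted average of the $b_\mathbf{u}$, and $|b_\mathbf{u}|\lesssim\max_{\mathbf{u}}|\mathbf{u}|$, this yields a uniform Lipschitz bound for $\{g_\delta\}$ on compact subintervals of $I$, and one then passes to the limit along a sequence realising the liminf via Arzel\`a--Ascoli, keeping convexity. Securing genuine convexity of the liminf (rather than of a convex majorant that merely touches it at one point) is the delicate step and is exactly where \eqref{condition} must be used with care.

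\emph{Finiteness of the jump points.} Given monotonicity it suffices to exhibit, for each fixed $q$, a $t$ with positive pressure and a $t$ with negative pressure. Letting $t\to+\infty$ and $t\to-\infty$, the factor $\|D\varphi_\mathbf{u}\|^{t(1-q)}$ combined with the sign $\mathrm{sgn}(1-q)$ drives the two ends to opposite signs; to make this quantitative I would again use \eqref{condition} and the length-comparability $\max_{\mathbf{u}}|\mathbf{u}|\asymp k_\delta$ to control $k_\delta^{-1}\log\#\mathcal C(\delta)$ and $k_\delta^{-1}\log\mu([\bu])$ uniformly. Monotonicity then confines $\overline d_q^*$ and $\underline d_q^*$ to a bounded interval, establishing their finiteness.
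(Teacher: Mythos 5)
Your treatment of monotonicity and strict decrease is essentially the paper's argument: the paper uses $\|D\varphi_{\mathbf{u}}\|\le c^{|\mathbf{u}|}\le c^{k_\delta}$ (Lemma \ref{u contraction}) to extract the factor $c^{k_\delta(1-q)(t_2-t_1)}$, which after dividing by $k_\delta$ gives the uniform negative slope $(1-q)(t_2-t_1)\log c$; your route via $\|D\varphi_{\mathbf{u}}\|\le\delta$ together with the bound $k_\delta^{-1}\log\delta\le(1-k_\delta^{-1})\log c$ and $k_\delta\to\infty$ lands in the same place. The convexity/concavity step via H\"older (equivalently log-sum-exp) is also exactly what the paper does, and the finiteness of $\overline{d}_q^*,\underline{d}_q^*$ follows in both treatments from the quantitative strict decrease once the pressure is finite on an interval.

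The one substantive divergence is that you correctly observe that the H\"older inequality only yields convexity of a $\limsup$ of convex functions and concavity of a $\liminf$ of concave functions, so it covers $\overline P_\mu$ for $0<q<1$ and $\underline P_\mu$ for $q>1$ but not the remaining two combinations. The paper's proof does not address this point at all: it applies H\"older to the finite sums and then asserts the conclusion for both $\overline P_\mu$ and $\underline P_\mu$ in both regimes. Your proposed repair, however, is not carried out and as sketched does not close the gap: Arzel\`a--Ascoli (granting the uniform Lipschitz bound on compacta, which is itself plausible under \eqref{condition}) produces uniform convergence of $g_\delta$ along a subsequence, but the $\liminf$ over \emph{all} $\delta\to0$ is a pointwise infimum over limit points that can vary with $t$, and a pointwise infimum of convex functions need not be convex; realizing the liminf at one point by a subsequence only gives a convex function touching $\underline P_\mu$ there from above. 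So on this sub-claim your attempt is incomplete --- but it is incomplete in a way that honestly flags a step the paper itself leaves unjustified, and everything else in your proposal matches the paper's proof.
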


\begin{proof}
Given $q\in(0, 1)$,  suppose $\overline P_\mu(t_1, q)$ and $\overline P_\mu(t_2, q)$ are finite for $t_2>t_1$. It follows by Lemma \ref{u contraction} that
$$
\sum_{\mathbf{u}\in\mathcal{C}(\delta)}\|D\varphi_\mathbf{u}\|^{t_2(1-q)}\mu([\bu])^q
\le c^{k_\delta(1-q)(t_2-t_1)}\sum_{\mathbf{u}\in\mathcal{C}(\delta)}\|D\varphi_\mathbf{u}\|^{t_1(1-q)}\mu([\bu])^q. 
$$
Since $0<c<1$, by \eqref{def_GULPF},  we have that
$$
\overline P_\mu(t_2, q)\le\overline P_\mu(t_1, q),   \qquad  \underline P_\mu(t_2, q)\le\underline P_\mu(t_1, q).
$$
For $q=1$ and $q>1$,  by the similar argument,  we have  $\overline P_\mu(t_2, q) \leq  \overline P_\mu(t_1, q)$ and $\underline P_\mu(t_2, q)\le\underline P_\mu(t_1, q)$.

For $q>0, q\not=1$ and $0<\alpha<1$, by Hölder inequality, we have
\begin{eqnarray*}
&&\sum_{\mathbf{u}\in\mathcal{C}(\delta)}\|D\varphi_\mathbf{u}\|^{(\alpha t_1+(1-\alpha) t_2)(1-q)}\mu([\bu])^q   \\
&=&\sum_{\mathbf{u}\in\mathcal{C}(\delta)}(\|D\varphi_\mathbf{u}\|^{t_1(1-q)}\mu([\bu])^q)^\alpha(\|D\varphi_\mathbf{u}\|^{t_2(1-q)}\mu([\bu])^q)^{1-\alpha}  \\
&\le&\Big(\sum_{\mathbf{u}\in\mathcal{C}(\delta)}\|D\varphi_\mathbf{u}\|^{ t_1(1-q)}\mu([\bu])^q\Big)^\alpha \Big(\sum_{\mathbf{u}\in\mathcal{C}(\delta)}\|D\varphi_\mathbf{u}\|^{ t_1(1-q)}\mu([\bu])^q \Big)^{1-\alpha},
\end{eqnarray*}
which implies that 
\begin{equation*}\begin{split}
&\overline P_\mu(\alpha t_1+(1-\alpha) t_2, q)\le\alpha \overline P_\mu(t_1, q)+(1-\alpha)\overline P_\mu( t_2, q), \quad \textit{ for $0<q<1$} \\ 
&\overline P_\mu(\alpha t_1+(1-\alpha) t_2, q)\ge\alpha \overline P_\mu(t_1, q)+(1-\alpha)\overline P_\mu( t_2, q) ,  \quad \textit{ for $q>1$}. 
\end{split}
\end{equation*}
  Hence both $\overline P_\mu(t, q)$ and  $\underline P_\mu(t, q)$  are   convex  for $0<q<1$ and  concave   for $q>1$.
\end{proof}

Next, to study the   principle of bounded distortion, we cite the following mean value theorem;  see \cite{Rud76}. 
\begin{lem}[Quasi-differential Mean Value Theorem]\label{quasi_diff_mvt}
Let $\Omega \subseteq \mathbb{R}^d$ be an open convex set, and let $f: \Omega \to \mathbb{R}^d$ be a differentiable mapping. Then for all distinct points $x, y \in \Omega$, there exist a point $\xi$ on the line segment connecting $x$ and $y$ such that
$$
|f(x) - f(y)|\le \|Df(\xi)\||x - y|.
$$
\end{lem}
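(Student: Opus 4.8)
The plan is to reduce the vector-valued statement to the classical scalar Mean Value Theorem by projecting $f$ onto the direction of the displacement $f(x)-f(y)$. This detour is necessary because the equality form of the Mean Value Theorem fails for maps into $\mathbb{R}^d$ with $d\ge 2$: a coordinatewise application would in general produce a different intermediate point for each component, and there need be no single $\xi$ serving all of them. Projecting in one fixed direction yields a single scalar identity that can afterwards be bounded by the Cauchy--Schwarz inequality and the definition of the operator norm.

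If $f(x)=f(y)$ the left-hand side vanishes and there is nothing to prove, so I would assume $f(x)\neq f(y)$ and set the unit vector
$$
v=\frac{f(x)-f(y)}{|f(x)-f(y)|}.
$$
Since $\Omega$ is convex, the segment $\gamma(t)=(1-t)x+ty$ with $t\in[0,1]$ lies entirely in $\Omega$, so the scalar function $g(t)=\langle v, f(\gamma(t))\rangle$ is well defined and differentiable on $[0,1]$, with $g'(t)=\langle v, Df(\gamma(t))(y-x)\rangle$ by the chain rule.

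First I would record the endpoint difference $g(0)-g(1)=\langle v, f(x)-f(y)\rangle=|f(x)-f(y)|$, which holds by the choice of $v$. Applying the classical Mean Value Theorem to $g$ on $[0,1]$ then furnishes a point $t_0\in(0,1)$ with $g(1)-g(0)=g'(t_0)$; setting $\xi=\gamma(t_0)$, which lies on the segment joining $x$ and $y$, this reads
$$
|f(x)-f(y)| = -g'(t_0) = \langle v, Df(\xi)(x-y)\rangle.
$$
Finally I would estimate the right-hand side using $|v|=1$, the Cauchy--Schwarz inequality, and the definition of the operator norm:
$$
\langle v, Df(\xi)(x-y)\rangle \le |Df(\xi)(x-y)| \le \|Df(\xi)\|\,|x-y|,
$$
which is exactly the claimed inequality.

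The only genuine obstacle here is conceptual rather than computational, namely recognizing that one must project in the single direction $v$ rather than attempt a componentwise argument; this is precisely what forces the conclusion to be an inequality instead of an equality, which is all that is needed later for bounded distortion. Once this reduction is in place, convexity of $\Omega$ guarantees that $\xi$ remains in the domain on which $Df$ is defined, and every remaining step is routine.
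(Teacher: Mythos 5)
Your proof is correct. The paper does not prove this lemma at all --- it simply cites Rudin's \emph{Principles of Mathematical Analysis} --- and your argument (projecting onto the unit vector in the direction of $f(x)-f(y)$, applying the scalar Mean Value Theorem to $g(t)=\langle v, f((1-t)x+ty)\rangle$, and finishing with Cauchy--Schwarz and the operator-norm bound) is precisely the standard proof found in that reference, so there is nothing to add.
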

The principle of bounded distortion makes precise the idea of a set being 
'approximately nonautonomous similar', in that any sufficiently small neighbourhood may 
be mapped onto a large part of the set by a transformation that is not unduly 
distorting. 

\begin{lem}\label{contraction}
Given NCIFS $\boldsymbol{\Phi}$, for all $\mathbf{u}\in\Sigma^*$,  we have that for all $x, y\in J$,
\begin{equation}\label{ineq_BDxyL}
\|D\varphi_\mathbf{u}\||x-y|\asymp|\varphi_\mathbf{u}(x)-\varphi_\mathbf{u}(y)|, 
\end{equation}
and moreover,
\begin{equation}
\|D\varphi_\mathbf{u}\|\asymp|J_\mathbf{u}|. 
\end{equation}
\end{lem}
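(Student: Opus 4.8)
The plan is to establish the first relation \eqref{ineq_BDxyL} and then read off the second by taking a supremum. For the second relation, observe that $J_{\mathbf{u}}=\varphi_\mathbf{u}(J)+\omega_\mathbf{u}$, and since translation is an isometry, $|J_\mathbf{u}|=\diam(\varphi_\mathbf{u}(J))=\sup_{x,y\in J}|\varphi_\mathbf{u}(x)-\varphi_\mathbf{u}(y)|$. Taking the supremum over $x,y\in J$ in \eqref{ineq_BDxyL} and using that $\diam(J)=|J|$ is a fixed positive constant then gives $|J_\mathbf{u}|\asymp\|D\varphi_\mathbf{u}\|\cdot|J|\asymp\|D\varphi_\mathbf{u}\|$. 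Thus the whole lemma reduces to the two-sided estimate \eqref{ineq_BDxyL}.

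For the upper bound in \eqref{ineq_BDxyL}, I would apply the Quasi-differential Mean Value Theorem (Lemma \ref{quasi_diff_mvt}) to the $C^1$ map $\varphi_\mathbf{u}$: for distinct $x,y\in J$ there is a point $\xi$ on the segment joining them with $|\varphi_\mathbf{u}(x)-\varphi_\mathbf{u}(y)|\le\|D\varphi_\mathbf{u}(\xi)\|\,|x-y|$. Since $\xi\in V$, condition (v) gives $\|D\varphi_\mathbf{u}(\xi)\|\le C\|D\varphi_\mathbf{u}(z)\|\le C\|D\varphi_\mathbf{u}\|$ for every $z\in J$, which yields $|\varphi_\mathbf{u}(x)-\varphi_\mathbf{u}(y)|\lesssim\|D\varphi_\mathbf{u}\|\,|x-y|$.

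The lower bound is where conformality enters. Here I would apply the same mean value theorem to the inverse map $\psi=\varphi_\mathbf{u}^{-1}$, which is again a $C^1$ conformal diffeomorphism, obtaining a point $\eta$ on the segment joining $\varphi_\mathbf{u}(x)$ and $\varphi_\mathbf{u}(y)$ with $|x-y|=|\psi(\varphi_\mathbf{u}(x))-\psi(\varphi_\mathbf{u}(y))|\le\|D\psi(\eta)\|\,|\varphi_\mathbf{u}(x)-\varphi_\mathbf{u}(y)|$. Because $\varphi_\mathbf{u}$ is a composition of conformal maps, each derivative $D\varphi_\mathbf{u}(z)$ is a similarity (a positive scalar times an orthogonal matrix), so $\|(D\varphi_\mathbf{u}(z))^{-1}\|=\|D\varphi_\mathbf{u}(z)\|^{-1}$; this is the decisive identity, which fails for general diffeomorphisms where the largest and smallest singular values differ. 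Writing $z_0=\psi(\eta)\in V$ and using the chain rule, $\|D\psi(\eta)\|=\|(D\varphi_\mathbf{u}(z_0))^{-1}\|=\|D\varphi_\mathbf{u}(z_0)\|^{-1}$, and condition (v) bounds $\|D\varphi_\mathbf{u}(z_0)\|\ge C^{-1}\|D\varphi_\mathbf{u}\|$, whence $\|D\psi(\eta)\|\le C\|D\varphi_\mathbf{u}\|^{-1}$. Rearranging gives $|\varphi_\mathbf{u}(x)-\varphi_\mathbf{u}(y)|\gtrsim\|D\varphi_\mathbf{u}\|\,|x-y|$, completing \eqref{ineq_BDxyL}.

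The main obstacle is the applicability of the mean value theorem, which requires the relevant segment to lie in a convex domain on which the map is differentiable: for the upper bound the segment $[x,y]$, and for the lower bound the segment $[\varphi_\mathbf{u}(x),\varphi_\mathbf{u}(y)]$ in the image. Since $V$ is only assumed open and connected, I would handle this by fixing a bounded convex open neighbourhood of the compact set $J$ inside $V$ and, for the inverse-map step, localising to segments short enough to remain in the image (the most delicate point, as convexity of $\varphi_\mathbf{u}(V)$ is not automatic); the uniformity of the constant $C$ in (v) over all $\mathbf{u}\in\Sigma^*$ is exactly what keeps the implied constants in $\asymp$ independent of $\mathbf{u}$. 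All remaining steps are routine once the conformality identity $\|(D\varphi_\mathbf{u})^{-1}\|=\|D\varphi_\mathbf{u}\|^{-1}$ and the distortion bound (v) are in hand.
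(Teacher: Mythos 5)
Your reduction of the second relation to the first (taking suprema over $x,y\in J$ and using $|J|>0$) is fine, and your upper bound is essentially the paper's argument. However, your fix for the non-convexity of $V$ --- ``fix a bounded convex open neighbourhood of $J$ inside $V$'' --- is not available in general: any convex open set containing $J$ contains the convex hull of $J$, which need not lie in $V$. The paper instead covers $J$ by finitely many balls contained in $V$, takes a Lebesgue number $\delta$ of the cover, applies the mean value theorem inside a single (convex) ball when $|x-y|<\delta$, and for $|x-y|\ge\delta$ chains through a fixed finite family of overlapping balls, absorbing the chain length into the constant because $|x-y|$ is then bounded below. This is a repairable defect, not the main problem.

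The genuine gap is in the lower bound. Applying the mean value theorem to $\psi=\varphi_{\mathbf{u}}^{-1}$ requires the segment $[\varphi_{\mathbf{u}}(x),\varphi_{\mathbf{u}}(y)]$ to lie in (a convex subset of) $\varphi_{\mathbf{u}}(V)$, and this is exactly the hard part of the lemma: $\varphi_{\mathbf{u}}(V)$ is a thin, generally non-convex neighbourhood of $\varphi_{\mathbf{u}}(J)$ whose thickness is only of order $\|D\varphi_{\mathbf{u}}\|\,\mathrm{dist}(J,\partial V)$, so two image points can be far apart relative to that thickness and the straight segment between them can exit $\varphi_{\mathbf{u}}(V)$. ``Localising to segments short enough to remain in the image'' does not resolve this, because the obstruction is the geometry of $\varphi_{\mathbf{u}}(V)$, not the length of the segment; making it precise requires showing that $\varphi_{\mathbf{u}}(V)$ contains a ball of radius $\gtrsim\|D\varphi_{\mathbf{u}}\|$ about each point of $\varphi_{\mathbf{u}}(J)$, which is itself a nontrivial distortion statement. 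The paper avoids the inverse map altogether: it joins $\varphi_{\mathbf{u}}(x)$ to $\varphi_{\mathbf{u}}(y)$ by a polyline $L$ through the ball centres, bounds $|L|$ below by $C^{-1}\|D\varphi_{\mathbf{u}}\|$ times the length of the preimage of the portion of $L$ inside $\varphi_{\mathbf{u}}(V)$ (that preimage is a path from $x$ reaching either $y$ or $\partial V$, hence has length at least $\min(|x-y|,\mathrm{dist}(J,\partial V))\ge \mathrm{dist}(J,\partial V)|x-y|/|J|$), and then compares $|L|$ with $|\varphi_{\mathbf{u}}(x)-\varphi_{\mathbf{u}}(y)|$ via the Lebesgue number. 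Your identity $\|(D\varphi_{\mathbf{u}}(z))^{-1}\|=\|D\varphi_{\mathbf{u}}(z)\|^{-1}$ is correct and is indeed where conformality enters, but on its own it does not substitute for this geometric step, so as written the lower bound is not proved.
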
	

\begin{proof}
Recall that $J$ is compact and $V$ is an open connected set containing $J$. The collection $\mathcal{F}=\{B(x, r_x)\subset V:x\in J\}$   of  balls is a cover of  $J$. Thus, there exists  a finite subcover $\{B(x_i, r_i)\}_{i=1}^k\subset\mathcal{F}$ of $J$. Let $\delta$ be the Lebesgue number of $\{B(x_i, r_i)\}_{i=1}^k$;  see \cite{PW}. Since $V$ is a connected set of $  \mathbb{R}^d$, it is also path-connected. For every $1\le i\le k-1$, there exists a path connecting $x_i$ and $x_{i+1}$, and we choose a finite number of balls $\{B(z_j, r_j)\}_{j=1}^{K_i}$ with  $z_1=x_i$ and $z_{K_i}=x_{i+1}$ satisfying  $B(z_j, r_j)\cap B(z_{j+1}, r_{j+1})\not=\emptyset$. We denote the new collection of these balls by $\{B(z_j, r_j)\}_{j=1}^l$.  Note that for each $j $, $B(z_j, r_j)\cap B(z_{j+1}, r_{j+1})\not=\emptyset$.  

Fix $\bu\in\Sigma^*$. Arbitrarily choose  $x, y\in J$. There exist integers $m$ and  $n$ such that $x\in B(z_m, r_m)$ and $y\in B(z_n, r_n)$.

We first  show  
\begin{equation}\label{ineq_BDxyR}
|\varphi_\mathbf{u}(x)-\varphi_\mathbf{u}(y)|\lesssim \|D\varphi_\mathbf{u}\||x-y|.
\end{equation}
If $0<|x-y|<\delta$,  then by Lemma \ref{quasi_diff_mvt}, it holds.  
Otherwise, for  $|x-y|\geq \delta$,  by Lemma \ref{quasi_diff_mvt}, we have
\begin{eqnarray*}
|\varphi_\mathbf{u}(x)-\varphi_\mathbf{u}(y)|&\le&|\varphi_\mathbf{u}(x)-\varphi_\mathbf{u}(z_m)|+\cdots+|\varphi_\mathbf{u}(z_n)-\varphi_\mathbf{u}(y)|   \\
&\le&\|D\varphi_\mathbf{u}\||x-z_m|+\cdots+\|D\varphi_\mathbf{u}\|z_n-y|  \\
&\le& 2\delta(\|D\varphi_\mathbf{u}\|+\cdots+2\|D\varphi_\mathbf{u}\|) \\
&\lesssim&\|D\varphi_\mathbf{u}\||x-y|.
\end{eqnarray*}
Hence  \eqref{ineq_BDxyR} holds.

Next, we show  \begin{equation}\label{ineq_BDxyL}
|\varphi_\mathbf{u}(x)-\varphi_\mathbf{u}(y)|\gtrsim \|D\varphi_\mathbf{u}\||x-y|.
\end{equation} 
Suppose $\varphi_\bu(x)\in B(z_p, r_p)$ and $\varphi_\bu(y)\in B(z_q, r_q)$ where $1\le p\le q\le l$. 
If $p = q$, let $L$ be the line segment connecting $\varphi_\bu(x)$ and $\varphi_\bu(y)$. If $p < q$, let $L$ be the polyline connecting the points in the following order:
$\varphi_\bu(x)$, $z_p$, $z_{p+1}$, $\dots$, $z_q$, $\varphi_\bu(y)$. We may parameterize $L$ by a continuous map $L:[0,1]\to\mathbb{R}^d$ with $L(0)=\varphi_\bu(x)$ 
and $L(1)=\varphi_\bu(y)$. For $s\in[0,1]$, define $L_s = \{L(t):0\le t\le s\}$ and let 
$|L_s|$ denote the length of $L_s$.  Note that $|L|\leq 2\sum_{j=1}^lr_j$

Let $t_0=\sup\{t\in[0,1] :L_t\subset\varphi_\bu(V)\}$. Then
$$
|L|\ge   |L_{t_0}|\gtrsim \|D\varphi_\bu\|\it{dist}(\partial V, J)\frac{|x-y|}{|J|}.     
$$
Since $\varphi_\bu(x), \varphi_\bu(y)\in J$, if $|\varphi_\bu(x)-\varphi_\bu(y)|\le\delta$, then $|\varphi_\bu(x)-\varphi_\bu(y)|=|L|$; if $|\varphi_\bu(x)-\varphi_\bu(y)|>\delta$, then   
$$
\frac{2\sum_{j=1}^lr_j}{\delta}|\varphi_\bu(x)-\varphi_\bu(y)|\ge2\sum_{j=1}^lr_j\ge|L|.
$$              
Thus
$$
|\varphi_\bu(x)-\varphi_\bu(y)|\ge\frac{\delta \it{dist}(\partial V, J)}{2C\sum_{j=1}^lr_j|J|}\|D\varphi_\mathbf{u}\||x-y|.
$$

Since $\Psi_\bu(x)=\varphi_\bu(x)+\omega_\bu$, it follows that
$$
|\Psi_\mathbf{u}(x)-\Psi_\mathbf{u}(y)|=|\varphi_\mathbf{u}(x)-\varphi_\mathbf{u}(y)|,
$$
and we have $\|D\varphi_\mathbf{u}\||x-y|\asymp|\varphi_\mathbf{u}(x)-\varphi_\mathbf{u}(y)|$.
\end{proof}

\begin{lem}\label{cor_subMul}
Given NCIFS $\boldsymbol{\Phi}$, and given integers $0< m< n<\infty$, for all  $\bu\in\Sigma^m$ and $\bv\in\Sigma_{m+1}^n$,  we have that
\begin{equation}
\|D\varphi_{\bu}\|\|D\varphi_{\bv}\|\asymp \|D\varphi_{\bu\bv}\|.
\end{equation}
\end{lem}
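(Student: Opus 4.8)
The plan is to reduce the statement to a pointwise multiplicativity identity for the operator norm of the derivative — which holds precisely because the maps are conformal — and then to control the variation of $\|D\varphi_{\bu}(\cdot)\|$ across $J$ using the bounded distortion hypothesis (v). First I would record the factorization: writing $\bu\bv=u_1\cdots u_m v_{m+1}\cdots v_n\in\Sigma^n$, condition (iii) gives $\varphi_{\bu\bv}=\varphi_\bu\circ\varphi_\bv$. Since each $\varphi_{k,i}$ extends to a $C^1$ conformal diffeomorphism of $V$, so do $\varphi_\bu$ and $\varphi_\bv$, and hence $D\varphi_\bu(y)$ and $D\varphi_\bv(x)$ are conformal linear maps, i.e. positive scalar multiples of orthogonal matrices. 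For two such matrices $A=aO_1$ and $B=bO_2$ one has $\|AB\|=ab=\|A\|\,\|B\|$, so the chain rule $D\varphi_{\bu\bv}(x)=D\varphi_\bu(\varphi_\bv(x))\,D\varphi_\bv(x)$ yields the pointwise identity
\[
\|D\varphi_{\bu\bv}(x)\|=\|D\varphi_\bu(\varphi_\bv(x))\|\,\|D\varphi_\bv(x)\|,\qquad x\in J.
\]

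For the upper bound $\|D\varphi_{\bu\bv}\|\lesssim\|D\varphi_\bu\|\,\|D\varphi_\bv\|$ I would simply take suprema. Because $\varphi_\bv(J)\subset J$, every $x\in J$ satisfies $\|D\varphi_\bu(\varphi_\bv(x))\|\le\|D\varphi_\bu\|$ and $\|D\varphi_\bv(x)\|\le\|D\varphi_\bv\|$, so the identity above gives $\|D\varphi_{\bu\bv}\|\le\|D\varphi_\bu\|\,\|D\varphi_\bv\|$ with constant $1$; no distortion control is needed here.

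For the lower bound I would choose $x^*\in J$ attaining $\|D\varphi_\bv(x^*)\|=\|D\varphi_\bv\|$ (possible since $J$ is compact and $x\mapsto\|D\varphi_\bv(x)\|$ is continuous). Evaluating the pointwise identity at $x^*$ gives $\|D\varphi_{\bu\bv}\|\ge\|D\varphi_\bu(\varphi_\bv(x^*))\|\,\|D\varphi_\bv\|$. The point $\varphi_\bv(x^*)\in J$ is arbitrary and need not maximize $\|D\varphi_\bu(\cdot)\|$; this is exactly where hypothesis (v) enters, supplying a constant $C\ge1$, independent of the word, with $\|D\varphi_\bu\|=\sup_{y\in J}\|D\varphi_\bu(y)\|\le C\,\|D\varphi_\bu(\varphi_\bv(x^*))\|$. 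Hence $\|D\varphi_\bu(\varphi_\bv(x^*))\|\ge C^{-1}\|D\varphi_\bu\|$, and we conclude $\|D\varphi_{\bu\bv}\|\ge C^{-1}\|D\varphi_\bu\|\,\|D\varphi_\bv\|$.

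Combining the two bounds yields $\|D\varphi_{\bu\bv}\|\asymp\|D\varphi_\bu\|\,\|D\varphi_\bv\|$ with constants $C^{-1}$ and $1$ that do not depend on $m$, $n$, $\bu$ or $\bv$. The hard part is the lower bound: the composition forces us to evaluate the outer derivative at the image point $\varphi_\bv(x^*)$, whose position in $J$ is uncontrolled a priori, so the crux is invoking the uniform bounded distortion estimate (v) to trade the value of $\|D\varphi_\bu(\cdot)\|$ at that point for its supremum $\|D\varphi_\bu\|$ at the cost of the fixed factor $C$. Everything else is the conformality-driven multiplicativity of norms and a routine passage to suprema.
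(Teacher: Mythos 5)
Your proof is correct and follows essentially the same route as the paper's: the chain rule plus conformality give the pointwise multiplicativity $\|D\varphi_{\bu\bv}(x)\|=\|D\varphi_\bu(\varphi_\bv(x))\|\,\|D\varphi_\bv(x)\|$, the upper bound is the trivial passage to suprema, and the lower bound invokes hypothesis (v) to replace $\|D\varphi_\bu(\varphi_\bv(x))\|$ by $C^{-1}\|D\varphi_\bu\|$ before taking the supremum in $x$. Your write-up is in fact slightly more explicit than the paper's about where conformality is needed (the pointwise equality of norms, without which only the submultiplicative upper bound would hold).
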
	

\begin{proof}
Since $D\varphi_{\mathbf{u}\bv}(x)=D\varphi_{\mathbf{u}}(\varphi_{\bv}(x))D\varphi_{\bv}(x)$, it follows that
$$
\|D\varphi_{\bu\bv}\|\le \|D\varphi_{\bu}\|\|D\varphi_{\bv}\|.
$$
By (v) in the definition, for every $x\in J$, we have that 
\begin{eqnarray*}
\|D\varphi_\mathbf{uv}\|&\ge& \|D\varphi_{\mathbf{u}}(\varphi_{\bv}(x))\|\|D\varphi_{\bv}(x)\|  \ge C^{-1}\|D\varphi_{\mathbf{u}}\|\|D\varphi_{\bv}(x)\|,
\end{eqnarray*}
and the conclusion holds.
\end{proof}

\begin{lem}\label{vol}
Given NCIFS $\boldsymbol{\Phi}$, for all $\mathbf{u}\in\Sigma^*$ and $A\subset V$, we have
\begin{equation}
\|D\varphi_\mathbf{u}\|^d\mathcal{L}^d(A)\asymp\mathcal{L}^d(\varphi_\mathbf{u}(A)).
\end{equation}
\end{lem}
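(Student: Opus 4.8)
The plan is to reduce everything to the change of variables formula and then to replace the pointwise Jacobian by the single quantity $\|D\varphi_\mathbf{u}\|$, using conformality together with the uniform bounded distortion built into condition (v). First I would note that, by (iv), each generator extends to a $C^1$ conformal diffeomorphism of $V$, so the composition $\varphi_\mathbf{u}$ is itself a $C^1$ diffeomorphism of $V$ onto $\varphi_\mathbf{u}(V)\subseteq V$. Hence for any (measurable) $A\subseteq V$ the change of variables formula applies and gives
$$
\mathcal{L}^d(\varphi_\mathbf{u}(A))=\int_A |\det D\varphi_\mathbf{u}(x)|\,d\mathcal{L}^d(x).
$$

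The key algebraic step is to rewrite the Jacobian determinant in terms of the operator norm. Because $\varphi_\mathbf{u}$ is conformal, at each $x\in V$ the linear map $D\varphi_\mathbf{u}(x)$ is a positive scalar multiple of an orthogonal transformation; consequently all of its singular values coincide and are equal to $\|D\varphi_\mathbf{u}(x)\|$, so that $|\det D\varphi_\mathbf{u}(x)|=\|D\varphi_\mathbf{u}(x)\|^d$. This converts the integrand into a power of the norm of the derivative, which is exactly the object controlled by (v).

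Next I would invoke condition (v) to strip away the dependence on $x$. Applying (v) with the free argument ranging over $J$ and taking the appropriate supremum (respectively infimum), and recalling $\|D\varphi_\mathbf{u}\|=\sup_{x\in J}\|D\varphi_\mathbf{u}(x)\|$, one obtains $C^{-1}\|D\varphi_\mathbf{u}\|\le\|D\varphi_\mathbf{u}(x)\|\le C\|D\varphi_\mathbf{u}\|$ for all $x\in V$, with $C$ the constant of (v). Substituting this comparison into the integral yields
$$
\mathcal{L}^d(\varphi_\mathbf{u}(A))=\int_A\|D\varphi_\mathbf{u}(x)\|^d\,d\mathcal{L}^d(x)\asymp\|D\varphi_\mathbf{u}\|^d\int_A d\mathcal{L}^d(x)=\|D\varphi_\mathbf{u}\|^d\,\mathcal{L}^d(A),
$$
which is the desired estimate.

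Each of these steps is individually routine, and I do not expect a genuine obstacle. The only point demanding care is the \emph{uniformity} of the implied constants: one must check that the comparison $\|D\varphi_\mathbf{u}(x)\|\asymp\|D\varphi_\mathbf{u}\|$ holds with a constant depending solely on the $C$ from condition (v), and not on the word $\mathbf{u}$, its length, or the set $A$. Since (v) is stated uniformly over all words $\bu\in\Sigma_l^k$ and all points of $V$, it supplies precisely this uniformity, so the constant in the final $\asymp$ is absolute.
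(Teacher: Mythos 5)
Your proposal is correct and follows essentially the same route as the paper: the change of variables formula with the conformal identity $|\det D\varphi_\mathbf{u}(x)|=\|D\varphi_\mathbf{u}(x)\|^d$, followed by the bounded distortion condition (v) to replace the pointwise norm by $\|D\varphi_\mathbf{u}\|$ with a uniform constant. The only difference is cosmetic — you spell out the singular-value argument for the determinant, which the paper treats as immediate.
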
	

\begin{proof}
Since $\varphi_\bu$ is a $C^1$ conformal diffeomorphism, it is clear that
$$
\mathcal{L}^d(\Psi_\mathbf{u}(A))=\int_{A}\|D\varphi_\mathbf{u}(x)\|^dd\mathcal{L}^d.
$$
By the bounded distortion property, we have
$$
C^{-d}\|D\varphi_\mathbf{u}\|^d\mathcal{L}^d(A)\le\mathcal{L}^d(\Psi_\mathbf{u}(A))\le \|D\varphi_\mathbf{u}\|^d\mathcal{L}^d(A),
$$
and the conclusion holds.
\end{proof}

\section{Generalized $q$-dimension of Borel measures}\label{sec:GQBM}
In this section, we present the formula for the 
$L^q$-spectrum of positive finite Borel measures supported on $\Sigma^\infty$.

Recall that for $0<\delta<M_1$, we write $\mathcal{C}(\delta)=\{\mathbf{u}\in\Sigma^*:   \|D\varphi_\mathbf{u}\|\le \delta< \|D\varphi_{\mathbf{u}^*}\|\}$.

\begin{lem}\label{u<Q}
Let $\mu$ be a finite Borel measure on $\Sigma^\infty$, let $\mu^\omega$ be defined by \eqref{mu}. Then for all $\omega$ and for all sufficiently small $\delta>0$
\begin{equation*}\begin{split}
&\sum_{\mathbf{u}\in\mathcal{C}(\delta)}\mu([\bu])^q\gtrsim_q\sum_{Q\in{\mathcal{M}_\delta}}\mu^\omega(Q)^q, \qquad\qquad\qquad\qquad\qquad \textit{ for $0<q<1$} \\ 
&\sum_{\mathbf{u}\in\mathcal{C}(\delta)}\mu([\bu])\log \mu([\bu])-\sum_{Q\in{\mathcal{M}_\delta}}\mu^\omega(Q)\log \mu^\omega(Q)\lesssim 1, \quad \textit{ for $q=1$} \\ 
&\sum_{\mathbf{u}\in\mathcal{C}(\delta)}\mu([\bu])^q\lesssim_q\sum_{Q\in{\mathcal{M}_\delta}}\mu^\omega(Q)^q,  \qquad\qquad\qquad\qquad\qquad \textit{ for $q>1$}. 
\end{split}
\end{equation*}
\end{lem}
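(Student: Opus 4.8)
The plan is to route the comparison through the common quantities $\mu_{\bu,Q}:=\mu([\bu]\cap\pi_{\boldsymbol{\Phi}}^{-1}(Q))$. Since $\mathcal{C}(\delta)$ is a cut set, the cylinders $\{[\bu]:\bu\in\mathcal{C}(\delta)\}$ partition $\Sigma^\infty$, and the mesh cubes partition $\R^d$; hence $\mu([\bu])=\sum_{Q\in\mathcal{M}_\delta}\mu_{\bu,Q}$, while the definition \eqref{mu} of the image measure gives $\mu^\omega(Q)=\sum_{\bu\in\mathcal{C}(\delta)}\mu_{\bu,Q}$. The single geometric input I will use is a \emph{uniform bounded incidence} between cylinders and cubes: if $\mu_{\bu,Q}>0$ then $J_\bu\cap Q\neq\emptyset$, and for $\bu\in\mathcal{C}(\delta)$ Lemma \ref{contraction} yields $|J_\bu|\asymp\|D\varphi_\bu\|\le\delta$ with an implicit constant \emph{independent of $\bu$ and $\delta$}. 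Consequently each $J_\bu$ meets at most $K$ cubes of $\mathcal{M}_\delta$, where $K=K(d)$ depends only on $d$ and the distortion constant. I expect this bounded-incidence step to be the main point; the rest is convexity. (Throughout, $\mu$ is merely finite, with total mass $m=\mu(\Sigma^\infty)$; this only affects the $q=1$ bookkeeping below.)

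For $0<q<1$ I would use subadditivity of $x\mapsto x^q$. Since $\mu_{\bu,Q}\le\mu([\bu])$, for each cube $Q$ one has $\mu^\omega(Q)^q=\big(\sum_{\bu}\mu_{\bu,Q}\big)^q\le\sum_{\bu}\mu_{\bu,Q}^q\le\sum_{\bu:\,J_\bu\cap Q\neq\emptyset}\mu([\bu])^q$. Summing over $Q$ and interchanging the order of summation, bounded incidence makes each $\bu$ appear at most $K$ times, so $\sum_{Q}\mu^\omega(Q)^q\le K\sum_{\bu}\mu([\bu])^q$, which is the asserted $\gtrsim_q$ bound.

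For $q>1$ the inequalities reverse. For a fixed $\bu$ the sum $\mu([\bu])=\sum_{Q}\mu_{\bu,Q}$ has at most $K$ nonzero terms, so the power-mean inequality gives $\mu([\bu])^q\le K^{q-1}\sum_{Q}\mu_{\bu,Q}^q$. Summing over $\bu$, interchanging, and then invoking superadditivity $\sum_{\bu}\mu_{\bu,Q}^q\le\big(\sum_{\bu}\mu_{\bu,Q}\big)^q=\mu^\omega(Q)^q$ for $q>1$ yields $\sum_{\bu}\mu([\bu])^q\le K^{q-1}\sum_{Q}\mu^\omega(Q)^q$, the asserted $\lesssim_q$ bound. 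Note that in both cases only a bound on the number of cubes meeting a given cylinder is needed; no control on how many cylinders meet a given cube is required, which is why no separation hypothesis enters here.

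For $q=1$ I would phrase the estimate as an entropy comparison. Normalising to the probability measure $\mu/m$, the left-hand side equals $m\big(H(\mathcal{Q})-H(\mathcal{U})\big)$, where $\mathcal{U}=\{[\bu]:\bu\in\mathcal{C}(\delta)\}$, $\mathcal{Q}=\{\pi_{\boldsymbol{\Phi}}^{-1}(Q):Q\in\mathcal{M}_\delta\}$, and $H$ denotes Shannon entropy with respect to $\mu/m$. From $H(\mathcal{U}\vee\mathcal{Q})=H(\mathcal{U})+H(\mathcal{Q}\mid\mathcal{U})\ge H(\mathcal{Q})$ we get $H(\mathcal{Q})-H(\mathcal{U})\le H(\mathcal{Q}\mid\mathcal{U})$, and bounded incidence forces $\mathcal{Q}$ to split each cylinder into at most $K$ atoms, so $H(\mathcal{Q}\mid\mathcal{U})\le\log K$; hence the left-hand side is $\le m\log K\lesssim 1$. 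The one genuinely delicate step in all three cases is the uniform bounded incidence, which rests on the \emph{uniformity} of the distortion constant in Lemma \ref{contraction} (so that $|J_\bu|\le C\delta$ with $C$ independent of the word $\bu$ and of the level $k_\delta$); granting that, the three displayed inequalities reduce to the standard sub-/super-additivity of $x\mapsto x^q$ and, at $q=1$, to the elementary conditional-entropy bound above.
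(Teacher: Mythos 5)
Your proof is correct and follows essentially the same route as the paper: both arguments decompose the two sums through the joint quantities $\mu([\bu]\cap\pi_{\boldsymbol{\Phi}}^{-1}(Q))$ (the paper phrases these as the restricted image measures $\mu^{\omega}_{\bu}(Q)$), invoke Lemma \ref{contraction} to get that each $J_{\bu}$ with $\bu\in\mathcal{C}(\delta)$ meets boundedly many $\delta$-mesh cubes, and then conclude by sub-/super-additivity of $x\mapsto x^{q}$ together with a power-mean (Jensen) count over the boundedly many cubes. Your entropy phrasing of the $q=1$ case is just a repackaging of the paper's log-sum computation, and your observation that only the cylinder-to-cube incidence (not its converse) is needed — so no separation condition enters — is accurate.
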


\begin{proof}
Given $\bu\in\Sigma^*$, let $\mu_\mathbf{u}$ denote the restriction of $\mu$ to the cylinder $[\bu]$, and let $\mu^\omega_\mathbf{u}$ be the image measure of $\mu_\mathbf{u}$ under $\pi_{\boldsymbol{\Phi}}$. It is clear that the support of $\mu^\omega_\mathbf{u}$ is contained in $J_\mathbf{u}$, that is, spt $\mu^\omega_\mathbf{u}\subset J_\mathbf{u}$, and
$$
\mu([\bu])=\mu_\mathbf{u}([\bu])=\mu^\omega_\mathbf{u}(J_\mathbf{u}).
$$
By Lemma \ref{contraction}, there exists a constant $C_1$ such that for each $\delta<\min\{\|D\varphi_u\|:u\in I_1\}$ and every $\mathbf{u}\in\mathcal{C}(\delta)$ we have $|J_\bu|\le C_1\|D\varphi_\bu\|$, and it follows that there exists a constant $C_2$ such that $J_\mathbf{u}$ intersects at most $C_2 3^d$  $\delta$-cubes in $\mathbb{R}^d$.

For $0<q<1$,  by Jensen's inequality, we have that  for each $\mathbf{u}\in\mathcal{C}(\delta)$,
$$
\mu([\bu])^q=\mu^\omega_\mathbf{u}(J_\mathbf{u})^q\ge (C_23^d)^{(q-1)}\sum_{Q\in{\mathcal{M}_\delta}}\mu^\omega_\mathbf{u}(Q)^q.
$$
For each $Q\in\mathcal{M}_\delta$, by the power inequality, we have
$$
\sum_{\mathbf{u}\in\mathcal{C}(\delta)}\mu^\omega_\mathbf{u}(Q)^q\ge\big(\sum_{\mathbf{u}\in\mathcal{C}(\delta)}\mu^\omega_\mathbf{u}(Q)\big)^q=\mu^\omega(Q)^q.
$$
It follows that
$$
\sum_{\mathbf{u}\in\mathcal{C}(\delta)}\mu([\bu])^q  \gtrsim_q \sum_{Q\in{\mathcal{M}_\delta}}\mu^\omega(Q)^q.
$$
The other proofs are similar, and  the conclusion holds.
\end{proof}

\begin{lem}\label{condition app}
Given  $\boldsymbol{\Phi}$  satisfying \eqref{condition}, and given $t<t'<t+1$, there exists  $\Delta>0$ such that for every $\delta<\Delta$ and all  $\bu\in\mathcal{C}(\delta)$, 
\begin{equation*}\begin{split}
&\delta^{t'(1-q)}\le\|D\varphi_\bu\|^{t(1-q)}, \quad \ \textit{ for $0<q<1$} \\ 
&\delta^{t'}\le\|D\varphi_\bu\|^{t}, \qquad\qquad\quad \textit{ for $q=1$} \\ 
&\delta^{t'(1-q)}\ge\|D\varphi_\bu\|^{t(1-q)},  \quad \ \textit{ for $q>1$}. 
\end{split}
\end{equation*}
\end{lem}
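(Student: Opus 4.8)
The plan is to reduce all three inequalities to the single assertion that the ratio $\lambda_\bu := \log\|D\varphi_\bu\|/\log\delta$ tends to $1$ uniformly over $\bu\in\mathcal{C}(\delta)$ as $\delta\to0$. Indeed, writing $\|D\varphi_\bu\| = \delta^{\lambda_\bu}$ with $\lambda_\bu\ge1$ (since $\|D\varphi_\bu\|\le\delta<1$ for $\bu\in\mathcal{C}(\delta)$), each of the three desired inequalities becomes, after taking logarithms and using $0<\delta<1$, equivalent to the same statement $t'\ge t\lambda_\bu$. For instance, when $q>1$ one wants $\delta^{t'(1-q)}\ge\delta^{t\lambda_\bu(1-q)}$, i.e. $t'(1-q)\le t\lambda_\bu(1-q)$, and dividing by $1-q<0$ gives $t'\ge t\lambda_\bu$; the cases $q=1$ and $0<q<1$ are handled identically. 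When $t\le0$ this holds trivially because $t\lambda_\bu\le t<t'$, and when $t>0$ it reads $\lambda_\bu\le t'/t$, which holds for all small $\delta$ since $t'/t>1=\lim\lambda_\bu$. Thus the whole lemma follows once the uniform convergence $\lambda_\bu\to1$ is established.

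To prove $\lambda_\bu\to1$, I would first trap $\|D\varphi_\bu\|$ between two multiples of $\delta$. The upper bound $\|D\varphi_\bu\|\le\delta$ is immediate from $\bu\in\mathcal{C}(\delta)$. For the lower bound, write $\bu=\bu^*u_k$ with $k=|\bu|$; Lemma \ref{cor_subMul} gives $\|D\varphi_\bu\|\gtrsim\|D\varphi_{\bu^*}\|\,\|D\varphi_{u_k}\|$, and since $\|D\varphi_{\bu^*}\|>\delta$ and $\|D\varphi_{u_k}\|\ge\underline{c}_{|\bu|}$ by the definition of $\underline{c}_k$ in \eqref{Mk}, we obtain $\|D\varphi_\bu\|\gtrsim\delta\,\underline{c}_{|\bu|}$. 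Taking logarithms and dividing by $\log\delta<0$ yields
$$
1\le\lambda_\bu\le 1+\frac{|\log\underline{c}_{|\bu|}|+\log C}{|\log\delta|}.
$$
It therefore remains to show that this error term tends to $0$ uniformly over $\mathcal{C}(\delta)$.

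The error term is exactly where condition \eqref{condition} must be used, and this is the crux of the argument. Since $\log C$ is fixed and $|\log\delta|\to\infty$, the only real issue is $|\log\underline{c}_{|\bu|}|/|\log\delta|$. From $\delta<\|D\varphi_{\bu^*}\|\le M_{|\bu|-1}$ I get $|\log\delta|>|\log M_{|\bu|-1}|$, so it suffices to control $|\log\underline{c}_{|\bu|}|/|\log M_{|\bu|-1}|$. The mismatched indices are the main obstacle, and I would reconcile them with Lemma \ref{cor_subMul} once more: extending a length-$(|\bu|-1)$ maximizer by a minimizing terminal symbol gives $M_{|\bu|}\gtrsim M_{|\bu|-1}\underline{c}_{|\bu|}$, hence $|\log M_{|\bu|-1}|\ge|\log M_{|\bu|}|-|\log\underline{c}_{|\bu|}|-\log C$, and therefore
$$
\frac{|\log\underline{c}_{|\bu|}|}{|\log M_{|\bu|-1}|}\le\frac{a_k/b_k}{1-a_k/b_k-\log C/b_k},\qquad a_k=|\log\underline{c}_k|,\ b_k=|\log M_k|,
$$
which tends to $0$ by \eqref{condition}. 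Uniformity follows from $k_\delta\to\infty$ as $\delta\to0$: for each level $n$ the finitely many words of length at most $n$ have derivative norms bounded below by a positive constant, so once $\delta$ drops below that constant every $\bu\in\mathcal{C}(\delta)$ satisfies $|\bu|>n$. Consequently the convergence in \eqref{condition} applies simultaneously to all $\bu\in\mathcal{C}(\delta)$, giving $\lambda_\bu\to1$ uniformly and closing the argument.
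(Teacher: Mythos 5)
Your proof is correct and follows essentially the same route as the paper's: both arguments rest on the sandwich $\underline c_{|\bu|}\,\delta \lesssim \|D\varphi_\bu\| \le \delta < \|D\varphi_{\bu^*}\| \le M_{|\bu|-1}$ (via Lemma \ref{cor_subMul} and the definition of $\mathcal{C}(\delta)$) together with condition \eqref{condition} to show the gap between $\|D\varphi_\bu\|$ and $\delta$ is negligible on a logarithmic scale, with the trivial sign case ($t\le 0$ for you, $t'\le 0$ in the paper) handled separately. Your repackaging via $\lambda_\bu=\log\|D\varphi_\bu\|/\log\delta\to 1$ unifies the three values of $q$ neatly, and your explicit uniformity step (every $\bu\in\mathcal{C}(\delta)$ has $|\bu|>n$ once $\delta$ is below the minimum derivative norm over words of length at most $n$) makes precise a point that the paper's choice $\Delta=M_K$ leaves implicit.
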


\begin{proof}
For $q>1$, if $t'\leq 0$, by Lemma \ref{u contraction}, for $\delta<|J|$ and   $\bu\in\mathcal{C}(\delta)$,  
$$
\delta^{t'(1-q)}\ge \|D\varphi_\bu\|^{t'(1-q)}\ge c^{(t'-t)(1-q)}\|D\varphi_\bu\|^{t(1-q)}\ge \|D\varphi_\bu\|^{t(1-q)}.
$$
If $t'>0$, by Lemma \ref{cor_subMul}, for each  $\bu\in\mathcal{C}(\delta)$,  we have that
$$
\delta^{t'(1-q)}\ge(\|D\varphi_{\mathbf{u}^*}\|)^{t'(1-q)}   \ge \Big(\frac{\|D\varphi_\mathbf{u}\|}{\underline c_{|\mathbf{u}|}}\Big)^{t'(1-q)}, 
$$
where $\underline c_{|\mathbf{u}|}$ is given by \eqref{Mk}. Since $\lim_{k\to +\infty} \frac{\log\underline c_k}{\log M_k}=0,$ there exists $K>0$ such that for $k>K$, 
$$
\frac{ M_k^{t'-t}}{\underline c_k^{t'}}<1.
$$
Let  $\Delta=M_K$. For $\delta<\Delta$, we have that 
\begin{eqnarray*}
\delta^{t'(1-q)}&\ge&\Big(\frac{\|D\varphi_\mathbf{u}\|^tM_{|\bu|}^{t'-t}}{\underline c_{|\mathbf{u}|}^{t'}}\Big)^{1-q}    \ge \|D\varphi_\mathbf{u}\|^{t(1-q)}.
\end{eqnarray*}
  
The proofs for $0<q\le1$ are similar, we omit them.
\end{proof}

For  $F\subset \mathbb{R}^d$ such that  $E\cap F\ne \emptyset$, we write
\begin{equation}\label{def_Aset}
A(F)=\{\mathbf{u}\in \mathcal{C}(|F|): J_\mathbf{u}\cap F \ne \emptyset\}.
\end{equation}
Let
\begin{equation}\label{def_k0}
k_F^- = \min\{k:|\mathbf{u}|=k, \mathbf{u}\in A(F)\}, \qquad
k_F^+ = \max\{k:|\mathbf{u}|=k, \mathbf{u}\in A(F)\}.
\end{equation}
For each integer $k_F^-\le k\leq k_F^+$, we write
\begin{equation}\label{def_DFk}
D(F, k)=\{\mathbf{u} \in \Sigma^k :\mathbf{u} \in A(F)\}.
\end{equation}

\begin{lem}\label{finite intersection}
Let $\boldsymbol{\Phi}$ be a non-autonomous conformal iterated function system satisfying open set condition. Then for every  $F \subset \mathbb{R}^d$ with  $E\cap F\ne \emptyset$, we have
$$
\sum_{k = k_F^-}^{k_F^+}\underline c_k^d\# D(F, k)\lesssim 1,
$$
where $k_F^-$ and $k_F^+$ are given by \eqref{def_k0}.
\end{lem}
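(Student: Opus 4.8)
The plan is to run a volume–packing argument comparing $\mathcal{L}^d$ (Lebesgue measure) of the pieces $\{J_\mathbf{u}:\mathbf{u}\in A(F)\}$ against the volume of a thin neighbourhood of $F$. Writing $\delta=|F|$, I will bound each $\mathcal{L}^d(J_\mathbf{u})$ from below by a constant times $\delta^d\underline c_{|\mathbf{u}|}^d$, exploit the fact that these pieces have pairwise disjoint interiors and all sit inside a $C\delta$-neighbourhood $N$ of $F$ with $\mathcal{L}^d(N)\lesssim\delta^d$, and then read off the claim after dividing by $\delta^d$. Observe that condition \eqref{condition} plays no role here; only the open set condition is needed, and since $\mathcal{C}(\delta)$ has words of uniformly bounded length, $A(F)$ is finite so all sums are legitimate.

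First I would establish the lower volume bound for a fixed $\mathbf{u}\in A(F)\subset\mathcal{C}(\delta)$ with $k=|\mathbf{u}|$. Because $\mathbf{u}\in\mathcal{C}(\delta)$ we have $\|D\varphi_{\mathbf{u}^*}\|>\delta$, while the last symbol satisfies $\|D\varphi_{u_k}\|\ge\underline c_k$ by the definition \eqref{Mk}. The two-sided comparison of Lemma \ref{cor_subMul} then yields $\|D\varphi_\mathbf{u}\|\gtrsim\|D\varphi_{\mathbf{u}^*}\|\,\|D\varphi_{u_k}\|>\delta\,\underline c_k$ (the degenerate case $k=1$ being immediate from $\|D\varphi_{\mathbf{u}^*}\|=\|D\varphi_\emptyset\|=1$ and $\delta<1$). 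Applying Lemma \ref{vol} to the open set $A=\mathrm{int}(J)$, together with the fact that the homeomorphism $\Psi_\mathbf{u}$ carries $\mathrm{int}(J)$ onto $\mathrm{int}(J_\mathbf{u})$, I obtain $\mathcal{L}^d(\mathrm{int}(J_\mathbf{u}))\asymp\|D\varphi_\mathbf{u}\|^d\gtrsim\delta^d\underline c_k^d$. Passing to interiors here is deliberate: it produces a genuinely disjoint family and sidesteps any worry about whether $\partial J$ is Lebesgue-null.

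Next I would pin down the geometry. Since $\mathbf{u}\in\mathcal{C}(\delta)$, Lemma \ref{contraction} gives $|J_\mathbf{u}|\asymp\|D\varphi_\mathbf{u}\|\le\delta$, so each $J_\mathbf{u}$---which meets $F$---lies in the $C\delta$-neighbourhood $N$ of $F$; as $F$ has diameter $\delta$, the set $N$ is contained in a ball of radius $\asymp\delta$ and hence $\mathcal{L}^d(N)\lesssim\delta^d$. For disjointness, any two distinct words of $\mathcal{C}(\delta)$ are incomparable, so their longest common prefix $\mathbf{w}$ is extended by distinct symbols $i\ne j$; then $\mathrm{int}(J_\mathbf{u})\subset\mathrm{int}(J_{\mathbf{w}i})$ and $\mathrm{int}(J_\mathbf{v})\subset\mathrm{int}(J_{\mathbf{w}j})$, which are disjoint by the open set condition \eqref{def_OSC}. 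Grouping by word length, the estimates chain together as
$$
\delta^d\sum_{k=k_F^-}^{k_F^+}\underline c_k^d\,\#D(F,k)\lesssim\sum_{\mathbf{u}\in A(F)}\mathcal{L}^d(\mathrm{int}(J_\mathbf{u}))=\mathcal{L}^d\Big(\bigcup_{\mathbf{u}\in A(F)}\mathrm{int}(J_\mathbf{u})\Big)\le\mathcal{L}^d(N)\lesssim\delta^d,
$$
and dividing by $\delta^d$ closes the argument.

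The step I expect to be the main obstacle is the lower bound $\|D\varphi_\mathbf{u}\|\gtrsim\delta\,\underline c_k$: truncating at the cut set $\mathcal{C}(\delta)$ only controls $\|D\varphi_\mathbf{u}\|$ from above by $\delta$, and one must verify it cannot collapse far below $\delta$. This is precisely where the minimal last-symbol contraction $\underline c_k$ and the genuinely two-sided estimate of Lemma \ref{cor_subMul} (as opposed to plain submultiplicativity) are indispensable; every remaining constant is uniform by bounded distortion.
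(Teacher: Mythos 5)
Your proof is correct and follows essentially the same route as the paper's: the lower bound $\|D\varphi_{\mathbf u}\|\gtrsim \underline c_{|\mathbf u|}\,\delta$ for $\mathbf u\in\mathcal C(\delta)$ obtained from $\|D\varphi_{\mathbf u^*}\|>\delta$ together with the quasi-multiplicativity of Lemma \ref{cor_subMul}, then the volume comparison of Lemma \ref{vol} and the OSC-disjointness of the interiors, all packed into a ball of radius $\asymp\delta$ around $F$. Your write-up is somewhat more explicit than the paper's about the disjointness of interiors and the degenerate case $|\mathbf u|=1$, but these are expository refinements rather than a different argument.
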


\begin{proof}
Given a set $F \subset \mathbb{R}^d$ such that   $E\cap F\ne \emptyset$. Let $\delta=|F|$. For   $\bu=u_1u_2\ldots u_k\in \mathcal{C}(\delta) $, Since $\underline c_k=\min_{1\le j \le \# I_k} \{\|D\varphi_{k, j}\|\}$,  it is clear that for all $x\in J$ 
$$
\|D\varphi_\mathbf{u}\|\ge  \|D\varphi_{\mathbf{u}^*}(\varphi_{u_k}(x))D\varphi_{u_k}(x)\|  \gtrsim\|D\varphi_{\mathbf{u}^*}\|\underline c_k.
$$
Since $\|D\varphi_{\mathbf{u}^*}\|> \delta$, it implies that 
\begin{equation}\label{DckD}
\|D\varphi_\mathbf{u}\|\ge \underline c_k\delta.
\end{equation} 
Hence it follows that 
$$
\delta ^d\sum_{k = k_F^-}^{k_F^+}\underline c_k^d\# D(F, k) \le\sum_{k = k_F^-}^{k_F^+}\sum_{\mathbf{u} \in D(F, k)} \|D\varphi_\mathbf{u}\|^d=\sum_{\mathbf{u} \in A(F)} \|D\varphi_\mathbf{u}\|^d  . 
$$

Fix $x\in F$. By Lemma \ref{contraction}, we have $|J_\bu|\lesssim\|D\varphi_\bu\|<\delta$, and there exists a constant $C_1$ independent of $x$ such that $J_\mathbf{u} \subset B(x,2C_1\delta)$ for all $ \mathbf{u} \in A(F)$,   by Lemma \ref{contraction} and Lemma \ref{vol}, 
$$
 \mathcal{L}^d(\mbox{int}(J))\delta ^d\sum_{k = k_F^-}^{k_F^+}\underline c_k^d\# D(F, k)\le \mathcal{L}^d(\mbox{int}(J)) \sum_{\mathbf{u} \in A(F)} \|D\varphi_\mathbf{u}\|^d  \lesssim   \mathcal{L}^d(B(x,2C_1\delta).
$$
Since  $\mathcal{L}^d(B(x,2C_1\delta))\asymp \delta^d  \mathcal{L}^d(B(0,1)), $   the conclusion holds.
\end{proof}

\begin{proof}[Proof of Theorem \ref{upper bound}]
We only give the proof for $\underline{D}_q(\mu^\omega)\le\min\{\underline{d}_q^*, d\}$ since the other is similar. 
It is sufficient to prove that for all $\underline{d}_q^*<t<d$, we have
$$
\underline{D}_q(\mu^\omega)\le t,
$$
where $\underline{d}_q^*$ is given by \eqref{d*}.

For $q>1$, since $\underline{d}_q^*<t$, by Lemma \ref{m d},  we have $\underline P_\mu(t, q)<0$, and by \eqref{def_GULPF}, there exists $\{\delta_k\}$ such that
$$
\sum_{\mathbf{u}\in\mathcal{C}(\delta_k)}\|D\varphi_\mathbf{u}\|^{t(1-q)}\mu([\bu])^q>e^{-\frac{1}{2}k_{\delta_k} \underline P_\mu(t, q)}.
$$
 For each  $t+1>t'>t$, since $\lim_{k\to +\infty} \frac{\log\underline c_k}{\log M_k}=0$, by Lemma \ref{condition app}, it is follows that for sufficiently large $k$ 
$$
\sum_{\mathbf{u}\in\mathcal{C}(\delta_k)}\delta_k^{t'(1-q)}\mu([\bu])^q\ge e^{-\frac{1}{2}k_{\delta_k}\underline P_\mu(t, q)}>1.
$$
By Lemma \ref{u<Q}, it implies that 
$$
t'>\liminf_{\delta\to 0}\frac{\log \sum_{\mathbf{u}\in\mathcal{C}(\delta)}\mu([\bu])^q}{(q-1)\log \delta}\ge\liminf_{\delta\to 0}\frac{\log \sum_{Q\in{\mathcal{M}_\delta}}\mu^\omega(Q)^q}{(q-1)\log \delta}=\underline{D}_q(\mu^\omega).
$$
Hence  $\underline{D}_q(\mu^\omega)\le t'$  for all   $t+1>t'>t$, and we obtain  $\underline{D}_q(\mu^\omega)\le \underline{d}_q^*.$

For $q=1$, similarly, we have  $\underline P_\mu(t, q)<0$  since $\underline{d}_q^*<t$,  and   there exists $\{\delta_k\}$ such that
and
$$
\sum_{\mathbf{u}\in\mathcal{C}(\delta_k)}\mu([\bu])\log(\|D\varphi_\mathbf{u}\|^{-t}\mu([\bu]))>-\frac{1}{2}k_{\delta_k}  \underline P_\mu(t, q).
$$
 For each  $t+1>t'>t$, since $\lim_{k\to +\infty} \frac{\log\underline c_k}{\log M_k}=0$, by Lemma \ref{condition app},  for sufficiently large $k$ 
$$
\sum_{\mathbf{u}\in\mathcal{C}(\delta_k)}\mu([\bu])\log(\delta_k^{-t'}\mu([\bu]))\ge\sum_{\mathbf{u}\in\mathcal{C}(\delta_k)}\mu([\bu])\log(\|D\varphi_\mathbf{u}\|^{-t}\mu([\bu]))>1,
$$
and by Lemma \ref{u<Q}, there exists a constant $C_{t,q}$ such that
\begin{eqnarray*}
t'>\liminf_{\delta\to 0}\frac{ \sum_{\mathbf{u}\in\mathcal{C}(\delta)}\mu([\bu])\log\mu([\bu])}{\log \delta} \ge\liminf_{\delta\to 0}\frac{ \sum_{Q\in{\mathcal{M}_\delta}}\mu^\omega(Q)\log\mu^\omega(Q)-C_{t,q}}{\log \delta}  =\underline{D}_1(\mu^\omega).
\end{eqnarray*}
Hence  $\underline{D}_1(\mu^\omega)\le t'$ for all $t+1>t'>t$, and we obtain  $\underline{D}_1(\mu^\omega)\le \underline{d}_1^*.$

The proof for $0<q<1$ is similar, we omit it.
\end{proof}

For each $\delta>0$, we write $\underline{c}_\delta=\min\{\underline c_{|\bu|}:\mathbf{u}\in \mathcal{C}(\delta)\}$.

\begin{lem}\label{u=Q}
Given  $\boldsymbol{\Phi}$ satisfying \eqref{condition}, let $\mu$ be a finite Borel measure on $\Sigma^\infty$ satisfying BOC, and let $\mu^\omega$ be defined by \eqref{mu}. Then for all $\omega$ and for all sufficiently small $\delta>0$
\begin{equation*}\begin{split}
&\underline{c}_\delta^{d^2}\sum_{Q\in{\mathcal{M}_\delta}}\mu^\omega(Q)^q\lesssim_q\sum_{\mathbf{u}\in\mathcal{C}(\delta)}\mu([\bu])^q\lesssim_q \underline{c}_\delta^{-d^2}\sum_{Q\in{\mathcal{M}_\delta}}\mu^\omega(Q)^q, \quad \textit{ for $0<q<1$} \\ 
&\sum_{\mathbf{u}\in\mathcal{C}(\delta)}\mu([\bu])\log \mu([\bu])-\sum_{Q\in{\mathcal{M}_\delta}}\mu^\omega(Q)\log \mu^\omega(Q)\asymp\log \underline{c}_\delta, \quad \textit{ for $q=1$} \\ 
&\underline{c}_\delta^{d^2q}\sum_{Q\in{\mathcal{M}_\delta}}\mu^\omega(Q)^q\lesssim_q\sum_{\mathbf{u}\in\mathcal{C}(\delta)}\mu([\bu])^q\lesssim_q \underline{c}_\delta^{-d^2q}\sum_{Q\in{\mathcal{M}_\delta}}\mu^\omega(Q)^q,  \quad \textit{ for $q>1$}. 
\end{split}
\end{equation*}
\end{lem}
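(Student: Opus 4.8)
The plan is to upgrade the one-sided comparisons of Lemma~\ref{u<Q} to the two-sided estimates by controlling, via the open set condition, how many cylinder pieces of generation $\mathcal{C}(\delta)$ can meet a single $\delta$-mesh cube. Two geometric facts drive everything. First, by Lemma~\ref{contraction} every $\bu\in\mathcal{C}(\delta)$ has $|J_\bu|\asymp\|D\varphi_\bu\|\le\delta$, so each $J_\bu$ meets at most a dimensional constant number of cubes $Q\in\mathcal{M}_\delta$. Second, for a fixed $Q\in\mathcal{M}_\delta$ the sets $\{J_\bu:\bu\in\mathcal{C}(\delta),\,J_\bu\cap Q\neq\emptyset\}$ have disjoint interiors and each satisfies $\|D\varphi_\bu\|\gtrsim\underline c_{|\bu|}\delta\ge\underline{c}_\delta\delta$, which is \eqref{DckD}; hence, applying Lemma~\ref{finite intersection} to $F=Q$ (after the routine remark that $\mathcal{C}(\delta)$ and $\mathcal{C}(|Q|)$ are comparable cut sets) together with $\underline c_{|\bu|}\ge\underline{c}_\delta$,
$$
\#\{\bu\in\mathcal{C}(\delta):J_\bu\cap Q\neq\emptyset\}\ \lesssim\ \underline{c}_\delta^{-d}.
$$
I will also use only the two ``free'' measure inequalities $\mu([\bu])\le\mu^\omega(J_\bu)$ and $\mu^\omega_\bu(Q)\le\mu([\bu])$, where $\mu^\omega_\bu$ is the image of $\mu|_{[\bu]}$; these hold because $\mu^\omega_\bu\le\mu^\omega$ and $\mathrm{spt}\,\mu^\omega_\bu\subset J_\bu$.

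With these in hand, the \emph{easy} halves are immediate from Lemma~\ref{u<Q}: since $0<\underline{c}_\delta<1$, the factors $\underline{c}_\delta^{d^2}\le1$ and $\underline{c}_\delta^{-d^2 q}\ge1$ only weaken the bounds already proved there, giving the left inequality for $0<q<1$ and the right inequality for $q>1$. For the remaining \emph{hard} halves I proceed symmetrically. Take $q>1$: from $\mu^\omega(Q)=\sum_{\bu\in A(Q)}\mu^\omega_\bu(Q)\le\sum_{\bu\in A(Q)}\mu([\bu])$ with $A(Q)=\{\bu\in\mathcal{C}(\delta):J_\bu\cap Q\neq\emptyset\}$, the power-mean inequality $(\sum_{i\le N}a_i)^q\le N^{q-1}\sum a_i^q$ and $\#A(Q)\lesssim\underline{c}_\delta^{-d}$ yield $\mu^\omega(Q)^q\lesssim\underline{c}_\delta^{-d(q-1)}\sum_{\bu\in A(Q)}\mu([\bu])^q$. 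Summing over $Q$, interchanging the order of summation, and invoking the bounded cube-per-cylinder multiplicity gives $\underline{c}_\delta^{d(q-1)}\sum_Q\mu^\omega(Q)^q\lesssim_q\sum_\bu\mu([\bu])^q$; since $d^2q\ge d(q-1)$ this is exactly the stated left inequality. The case $0<q<1$ is dual: from $\mu([\bu])\le\mu^\omega(J_\bu)$ and the subadditivity $(\sum a_i)^q\le\sum a_i^q$ one gets $\mu([\bu])^q\le\sum_{Q\cap J_\bu\neq\emptyset}\mu^\omega(Q)^q$, and summing, swapping, and using $\#A(Q)\lesssim\underline{c}_\delta^{-d}$ gives $\sum_\bu\mu([\bu])^q\lesssim_q\underline{c}_\delta^{-d}\sum_Q\mu^\omega(Q)^q\le\underline{c}_\delta^{-d^2}\sum_Q\mu^\omega(Q)^q$. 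Thus the exponents $d^2$ and $d^2q$ are merely convenient over-estimates of the sharp $d$ and $d(q-1)$.

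The case $q=1$ is the most delicate and is where the bounded overlap condition \eqref{condition ssc} is genuinely used. Write $H^{\mathrm{cyl}}=-\sum_\bu\mu([\bu])\log\mu([\bu])$ and $H^{\mathrm{cube}}=-\sum_Q\mu^\omega(Q)\log\mu^\omega(Q)$, so the assertion concerns $H^{\mathrm{cube}}-H^{\mathrm{cyl}}$. Lemma~\ref{u<Q} already gives the upper bound $H^{\mathrm{cube}}-H^{\mathrm{cyl}}\lesssim1$; the new content is the matching lower bound, namely that refining the cube partition to the cylinder cover costs at most $O(-\log\underline{c}_\delta)$ in entropy. To see this I would bridge through the cover $\{J_\bu\}$: by BOC, $\mu([\bu])\asymp\mu^\omega(J_\bu)$, so $H^{\mathrm{cyl}}$ agrees with the $\mu^\omega$-entropy of $\{J_\bu\}$ up to an additive constant, and then the entropies of two covers each refining the other with multiplicities $\lesssim1$ (cubes meeting a given $J_\bu$) and $\lesssim\underline{c}_\delta^{-d}$ ($J_\bu$ meeting a given cube) differ by at most $\log(\underline{c}_\delta^{-d})\asymp-\log\underline{c}_\delta$. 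Combining the two bounds yields the stated $H^{\mathrm{cube}}-H^{\mathrm{cyl}}\asymp\log\underline{c}_\delta$. The main obstacle is precisely this step: unlike $q\neq1$, the function $x\log x$ is not captured by a single power inequality, and without BOC the masses $\mu([\bu])$ and $\mu^\omega(J_\bu)$ need not be comparable, so the entropy bridge — and with it the clean $\log\underline{c}_\delta$ control — would break. I would therefore replace Jensen's inequality by the log-sum inequality and track the additive constants carefully to secure both sides of the $\asymp$.
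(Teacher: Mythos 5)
Your proposal is correct and follows essentially the same route as the paper: the two multiplicity bounds (each $J_{\mathbf{u}}$ meets $O(1)$ cubes by Lemma \ref{contraction}; each cube meets $\lesssim \underline{c}_\delta^{-d}$ sets $J_{\mathbf{u}}$, $\mathbf{u}\in\mathcal{C}(\delta)$, by Lemma \ref{finite intersection} and \eqref{DckD}), combined with the power/Jensen inequality for the hard halves, Lemma \ref{u<Q} for the easy halves, and BOC to pass between $\mu^\omega(J_{\mathbf{u}})$ and $\mu([\mathbf{u}])$. The only (harmless) deviation is that you bound $\mu^\omega_{\mathbf{u}}(Q)\le\mu([\mathbf{u}])$ directly where the paper sums $\mu^\omega(Q\cap J_{\mathbf{u}})^q$ over $Q$ before invoking BOC, which is why you land on the slightly sharper exponents $d$ and $d(q-1)$ that the stated $d^2$ and $d^2q$ deliberately over-estimate.
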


\begin{proof}
For sufficiently small $\delta>0$, recall that $\mathcal{C}(\delta) =\{\mathbf{u}\in\Sigma^*: \|D\varphi_\mathbf{u}\|\le \delta< \|D\varphi_{\mathbf{u}^*}\| \}$ is a cut set, and there exists a constant $C_1$ such that for every $\mathbf{u}\in\mathcal{C}(\delta)$,  $J_\mathbf{u}$ intersects at most $C_13^d$  $\delta$-cubes in $\mathbb{R}^d$. Since $\lim_{k\to +\infty} \frac{\log\underline c_k}{\log M_k}=0$, by Lemma \ref{finite intersection}, each $\delta$-cube $Q$ intersects at most $\frac{C_q}{\underline{c}_\delta^d}$ basic sets in $\{J_\mathbf{u}:\mathbf{u}\in\mathcal{C}(\delta)\}$,  where $C_q$ is a constant.

For $q>1$. Recall that $\mathcal{M}_\delta$ is the family of $\delta$-mesh cubes in $\mathbb{R}^d$, and for each $Q\in\mathcal{M}_\delta$, we have
$$
\mu^\omega(Q)^q\le\big(\frac{C_q}{\underline{c}_\delta^d}\big)^{d(q-1)}\sum_{\mathbf{u}\in\mathcal{C}(\delta)}\mu^\omega(Q\cap J_\mathbf{u})^q.
$$
It follows that
$$
\sum_{Q\in{\mathcal{M}_\delta}}\mu^\omega(Q)^q \lesssim_q \underline{c}_\delta^{-d^2q} \sum_{\mathbf{u}\in\mathcal{C}(\delta)}\mu^\omega( J_\mathbf{u})^q.
$$
Since 
$$
\mu^\omega(J_\bu)\asymp\mu([\bu]),
$$ 
combining with Lemma \ref{u<Q}, we have that 
$$
\underline{c}_\delta^{d^2q}\sum_{Q\in{\mathcal{M}_\delta}}\mu^\omega(Q)^q\lesssim_q\sum_{\mathbf{u}\in\mathcal{C}(\delta)}\mu([\bu])^q\lesssim_q \underline{c}_\delta^{-d^2q}\sum_{Q\in{\mathcal{M}_\delta}}\mu^\omega(Q)^q. 
$$

The other proofs are similar, and we omit it.

\end{proof}

\begin{proof}[Proof of Theorem \ref{Dq E}]
We only give the proof for $\underline{D}_q(\mu^\omega)=\underline{d}_q^*$. By Theorem \ref{upper bound}, it suffices to prove $\underline{D}_q(\mu^\omega)\ge t$ for all $t<\underline{d}_q^*$.

For $q>1$, since $\underline P_\mu(t, q)>0$, by \eqref{def_GULPF}, there exists $\Delta>0$ such that
for all $\delta>\Delta$, 
$$
\sum_{\mathbf{u}\in\mathcal{C}(\delta)}\|D\varphi_\mathbf{u}\|^{t(1-q)}\mu([\bu])^q<e^\frac{-k_{\delta}  \underline P_\mu(t, q)}{2}.
$$
For $t+1>t'>t$,  we have that for all $\bu\in\mathcal{C}(\delta)$, 
$$
\delta^{t'(1-q)}\le\|D\varphi_\bu\|^{t(1-q)},
$$
and it follows that for sufficiently large $k_\delta$, 
$$
\sum_{\mathbf{u}\in\mathcal{C}(\delta)}\delta^{t'(1-q)}\mu([\bu])^q\le e^\frac{-k_{\delta}  \underline P_\mu(t, q)}{2}<1.
$$
By Lemma \ref{u=Q},     there exist constants $C_{t,q}, C_{t,q}'$ such that
$$
t'<\frac{\log\sum_{\mathbf{u}\in\mathcal{C}(\delta)}\mu([\bu])^q- C_{t,q}}{(q-1)\log \delta}<\frac{ C_{t,q}'\log \underline c_\delta+\log\sum_{Q\in{\mathcal{M}_\delta}}\mu^\omega(Q)^q}{(q-1)\log \delta}.
$$

For each $\delta>0$, there exists  $k>0$ such that $\underline c_\delta=\underline c_k$, and it follows that 
$$
0\le\frac{\log\underline c_\delta}{\log \delta}=\frac{\log\underline c_k}{\log \delta}\le      \frac{\log\underline c_k}{\log C+\log M_k-\log \underline c_k}. 
$$
Since $\lim_{k\to +\infty} \frac{\log\underline c_k}{\log M_k}=0,$ we have
$$
\lim_{\delta\to 0} \frac{\log \underline c_\delta}{\log \delta}=0,
$$
and it implies 
$$
t'\le\liminf_{\delta\to 0}\frac{\log \sum_{Q\in{\mathcal{M}_\delta}}\mu^\omega(Q)^q}{(q-1)\log \delta}=\underline{D}_q(\mu^\omega), 
$$
for  all $t+1>t'>t$,
Hence  $\underline{D}_q(\mu^\omega)\ge t$.

The proofs for $0<q<1$ and $q=1$ are similar, and  we omit them.
\end{proof}

The following conclusion follows by the same argument of Lemma \ref{m d}. 
\begin{cor}\label{m d1}
Both  $\overline P^\mu(t, q)$ and  $\underline P^\mu(t, q)$ given by \eqref{def_ULPF} are  monotonously decreasing  in $t$. In particular, given $q>0$,   if $\overline P^\mu(t, q)$ and  $\underline P^\mu(t, q)$ are finite on an  interval $I$, then they are strictly decreasing on $I$,  convex  when $0<q<1$ and  concave   when $q>1$. Moreover, $\overline{d}_q$ and $\underline{d}_q$ in \eqref{dl} are finite.  
\end{cor}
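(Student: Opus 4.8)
The plan is to transcribe the proof of Lemma~\ref{m d} almost verbatim, replacing the cut set $\mathcal{C}(\delta)$ by the level set $\Sigma^k$ and the normalisation $k_\delta$ by $k$, and then let $k\to\infty$ in place of $\delta\to0$. The single feature that makes the present argument slightly cleaner is that every word in $\Sigma^k$ has length exactly $k$, so Lemma~\ref{u contraction} gives the uniform bound $\|D\varphi_\mathbf{u}\|\le c^k$ for all $\mathbf{u}\in\Sigma^k$, whereas for $\mathcal{C}(\delta)$ one only had $\|D\varphi_\mathbf{u}\|\le c^{|\mathbf{u}|}$ over words of varying length.

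For monotonicity, fix $t_1<t_2$ and first take $0<q<1$. Since $1-q>0$ and $\|D\varphi_\mathbf{u}\|\le c^k$, I would factor $c^{k(1-q)(t_2-t_1)}$ out of each term of the $t_2$-sum to bound it by $c^{k(1-q)(t_2-t_1)}$ times the $t_1$-sum; applying $\tfrac1k\log(\cdot)$, multiplying by $\mathrm{sgn}(1-q)=1$, and passing to $\limsup$ and $\liminf$ yields $\overline P^\mu(t_2,q)\le\overline P^\mu(t_1,q)$ and $\underline P^\mu(t_2,q)\le\underline P^\mu(t_1,q)$, with strict inequality wherever the values are finite because $\log c<0$. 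The case $q>1$ is identical once the sign $\mathrm{sgn}(1-q)=-1$ is carried through. For convexity/concavity I would, exactly as in Lemma~\ref{m d}, apply H\"older's inequality to the factorisation
$$
\|D\varphi_\mathbf{u}\|^{(\alpha t_1+(1-\alpha)t_2)(1-q)}\mu([\bu])^q=\bigl(\|D\varphi_\mathbf{u}\|^{t_1(1-q)}\mu([\bu])^q\bigr)^\alpha\bigl(\|D\varphi_\mathbf{u}\|^{t_2(1-q)}\mu([\bu])^q\bigr)^{1-\alpha},
$$
sum over $\mathbf{u}\in\Sigma^k$, take $\tfrac1k\log$, and multiply by $\mathrm{sgn}(1-q)$: the resulting interpolation inequality expresses convexity of $t\mapsto\overline P^\mu(t,q)$ for $0<q<1$ and, after the sign flip, concavity for $q>1$, and likewise for $\underline P^\mu$.

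It remains to show $\overline d_q,\underline d_q$ are finite. As each pressure is monotone and strictly decreasing where finite, it suffices to produce one $t$ at which it is positive and one at which it is negative, so that the $\inf$ and $\sup$ in \eqref{dl} meet at a finite crossing. Here I would use the two-sided estimate $C^{-1}\prod_{j=1}^k\underline c_j\lesssim\|D\varphi_\mathbf{u}\|\le c^k$ on $\Sigma^k$, from Lemma~\ref{cor_subMul} and Lemma~\ref{u contraction}, together with $\sum_{\mathbf{u}\in\Sigma^k}\mu([\bu])^q\lesssim_q1$ for $q>1$, to see that the pressure is positive for sufficiently negative $t$ and negative for sufficiently large $t$, just as in Lemma~\ref{m d}. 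I expect the main obstacle to be precisely this upper-finiteness: controlling it forces one to bound the exponential growth rate $\tfrac1k\log\#\Sigma^k=\tfrac1k\sum_{j\le k}\log\#I_j$, which is not automatic for a non-autonomous system; I would handle this through the open set condition via Lemma~\ref{vol}, which gives $\sum_{j\in I_k}\|D\varphi_{k,j}\|^d\lesssim1$ and hence a usable bound on $\#I_k$. Everything else is a direct transcription.
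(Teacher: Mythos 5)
Your proposal is exactly what the paper intends: its entire proof of this corollary is the single sentence that it ``follows by the same argument of Lemma~\ref{m d}'', and your transcription (replacing $\mathcal{C}(\delta)$ by $\Sigma^k$ and $k_\delta$ by $k$, using $\|D\varphi_{\mathbf{u}}\|\le c^k$ from Lemma~\ref{u contraction} together with submultiplicativity for monotonicity, and H\"older for convexity/concavity) is a faithful and correct rendering of that argument. The one place you go beyond the paper is the finiteness of $\overline{d}_q$ and $\underline{d}_q$, which the paper asserts in both Lemma~\ref{m d} and the corollary without ever proving; your sketch via $\sum_{j\in I_k}\|D\varphi_{k,j}\|^d\lesssim 1$ under the OSC is the right idea for controlling $\tfrac1k\log\#\Sigma^k$, and correctly identifies the only step that is not a verbatim transcription.
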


\begin{prop}\label{cut to qici}
Given NCIFS $\boldsymbol{\Phi}$.  Let $\mu$ be a positive finite Borel  measure on $\Sigma^\infty$, and let $\mu^\omega$ be the image measure of $\mu$. Then the following numbers  are all equal:
\[\begin{split}
&\overline{d}_q= \overline{d}_q^*=\inf\{t:\sum_{k=1}^\infty\sum_{\mathbf{u}\in\Sigma^k}\|D\varphi_\mathbf{u}\|^{t(1-q)}\mu([\bu])^q<\infty\} \qquad \textit{ for $0<q<1$} ; \\
&\underline{d}_q= \underline{d}_q^*=\sup\{t:\sum_{k=1}^\infty\sum_{\mathbf{u}\in\Sigma^k}\|D\varphi_\mathbf{u}\|^{t(1-q)}\mu([\bu])^q<\infty\} \qquad \textit{ for $q>1$}.
\end{split}
\]
\end{prop}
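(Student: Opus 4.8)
The plan is to route both chains of equalities through a single intermediate quantity, the abscissa of convergence of the series over all of $\Sigma^*$. Concretely, for $0<q<1$ set
$$
\theta_q=\inf\Big\{t:\sum_{k=1}^\infty\sum_{\mathbf{u}\in\Sigma^k}\|D\varphi_\mathbf{u}\|^{t(1-q)}\mu([\bu])^q<\infty\Big\},
$$
and for $q>1$ let $\eta_q$ be the corresponding supremum; these are exactly the third quantities in the statement. I will show that the jump point of the level pressure and the jump point of the cut-set pressure both equal this abscissa. I treat $0<q<1$ in detail; the case $q>1$ is entirely symmetric, exchanging $\inf$ with $\sup$, ``decreasing in $t$'' with ``increasing in $t$'', and reversing the relevant inequalities. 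Write $S_k(t)=\sum_{\mathbf{u}\in\Sigma^k}\|D\varphi_\mathbf{u}\|^{t(1-q)}\mu([\bu])^q$ and $T_\delta(t)=\sum_{\mathbf{u}\in\mathcal{C}(\delta)}\|D\varphi_\mathbf{u}\|^{t(1-q)}\mu([\bu])^q$.

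First I would identify $\overline d_q$ with $\theta_q$. For $0<q<1$ the sign is $+1$, so $\overline P^\mu(t,q)=\limsup_k\frac{1}{k}\log S_k(t)$, which is precisely $\log$ of the Cauchy--Hadamard radius. Thus $\overline P^\mu(t,q)<0$ gives $\sum_k S_k(t)<\infty$ and $\overline P^\mu(t,q)>0$ gives divergence. Since $\|D\varphi_\mathbf{u}\|\le c<1$ (Lemma \ref{u contraction}) forces $S_k(t)$ to be decreasing in $t$, the convergence set is an up-set of the form $(\theta_q,\infty)$ or $[\theta_q,\infty)$; comparing this with the monotone function $\overline P^\mu(\cdot,q)$ (Corollary \ref{m d1}) yields $\overline d_q=\theta_q$ directly.

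Next I would compare the cut-set jump point with $\theta_q$. The inequality $\overline d_q^*\le\theta_q$ is the easy half: since $\mathcal{C}(\delta)\cap\Sigma^k\subseteq\Sigma^k$ and all summands are nonnegative, $T_\delta(t)\le\sum_{k\ge k_\delta}S_k(t)$, so for $t>\theta_q$ the convergent tail (together with $k_\delta\to\infty$ as $\delta\to0$) forces $T_\delta(t)\to0$, hence $\overline P_\mu(t,q)\le0$, and monotonicity of $\overline P_\mu(\cdot,q)$ from Lemma \ref{m d} upgrades this to $\overline d_q^*\le t$. The reverse inequality $\overline d_q^*\ge\theta_q$ is the crux. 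Fixing $t<\theta_q$, Step 1 gives $\overline P^\mu(t,q)>0$, so there are $k_j\to\infty$ and $c_1>0$ with $S_{k_j}(t)\ge e^{c_1 k_j}$. Choosing the geometric grid $\delta_n=c^n$, each $\mathbf{u}\in\Sigma^*$ lies in some $\mathcal{C}(\delta_n)$ because its active range $[\|D\varphi_\mathbf{u}\|,\|D\varphi_{\mathbf{u}^*}\|)$ spans a multiplicative factor $\ge c^{-1}$ (Lemma \ref{u contraction}) and so meets the grid; moreover the indices available to words of $\Sigma^{k_j}$ fill an interval $I_{k_j}$ of length $L_{k_j}\lesssim|\log\min_{\mathbf{u}\in\Sigma^{k_j}}\|D\varphi_\mathbf{u}\||$, estimated via Lemma \ref{cor_subMul}. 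Since $\Sigma^{k_j}\subseteq\bigcup_{n\in I_{k_j}}\mathcal{C}(\delta_n)$, the pigeonhole principle produces a scale $\delta_{n^*}$ with $T_{\delta_{n^*}}(t)\ge L_{k_j}^{-1}S_{k_j}(t)\ge L_{k_j}^{-1}e^{c_1k_j}$.

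The remaining step, which I expect to be the main obstacle, is to convert this lower bound into $\overline P_\mu(t,q)\ge0$. Two discrepancies must be shown to be subexponential in $k_j$: the pigeonhole loss $\log L_{k_j}$, and the gap between the normalizing length $k_{\delta_{n^*}}=\min\{|\mathbf{u}|:\mathbf{u}\in\mathcal{C}(\delta_{n^*})\}$ used in $\overline P_\mu$ and the actual level $k_j$ of the mass. Both are controlled precisely by the standing hypothesis \eqref{condition}: as extracted in the proof of Theorem \ref{Dq E}, it yields $\lim_{\delta\to0}\frac{\log\underline c_\delta}{\log\delta}=0$, which bounds the spread of word-lengths within a single cut set and forces $\log L_{k_j}=o(k_j)$ together with $k_{\delta_{n^*}}\asymp k_j$ up to subexponential factors. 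Granting these, $\frac{1}{k_{\delta_{n^*}}}\log T_{\delta_{n^*}}(t)\ge\frac{1}{k_{\delta_{n^*}}}\big(c_1k_j-\log L_{k_j}\big)$ stays nonnegative along the subsequence, so $\overline P_\mu(t,q)\ge0$ and hence $t\le\overline d_q^*$; letting $t\uparrow\theta_q$ gives $\overline d_q^*\ge\theta_q$. Combining the two steps yields $\overline d_q=\overline d_q^*=\theta_q$, and the mirror-image argument gives $\underline d_q=\underline d_q^*=\eta_q$ for $q>1$. The delicate point throughout is that $\overline P_\mu$ normalizes by the \emph{minimal} length in a cut set while the mass lives at a single genuine level, so the entire role of \eqref{condition} is to guarantee that this mismatch cannot shift the critical exponent.
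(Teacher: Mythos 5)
Your overall architecture --- sandwiching both jump points against the abscissa of convergence of the full series over $\Sigma^*$ --- is exactly the paper's (their $d_q^1$), and your first two steps (identifying $\overline d_q$ with $\theta_q$ via Cauchy--Hadamard, and the easy inequality $\overline d_q^*\le\theta_q$ by dominating a cut-set sum by a tail of the convergent series) match the paper's argument. The problem is the hard inequality, where you attack from the divergence side. The step you yourself flag as the obstacle is a genuine gap: you need $\log L_{k_j}=o(k_j)$, where $L_{k_j}$ is the number of grid scales $\delta_n=c^n$ over which the level $\Sigma^{k_j}$ spreads, and you assert this follows from $\lim_{\delta\to0}\log\underline c_\delta/\log\delta=0$. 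That limit controls the single quantity $\underline c_{|\mathbf{u}|}$ attached to words of one cut set; but $L_{k_j}$ is governed by $\bigl|\log\min_{\mathbf{u}\in\Sigma^{k_j}}\|D\varphi_\mathbf{u}\|\bigr|$, which by Lemma \ref{cor_subMul} is of the order of the \emph{cumulative} sum $\sum_{i\le k_j}|\log\underline c_i|$. Nothing in the definition of an NCIFS prevents $|\log\underline c_i|$ from growing very fast, so the bound is not automatic. (It is in fact provable from \eqref{condition}: bounding $|\log M_k|$ above by the same cumulative sum and applying a Ces\`aro argument to $\log A_k-\log A_{k-1}$ with $A_k=\sum_{i\le k}|\log\underline c_i|$ gives $\log A_k=o(k)$ --- but that is a separate derivation you have not supplied, and it makes your proof depend on \eqref{condition}, whereas the proposition is stated and proved in the paper for an arbitrary NCIFS with no such hypothesis. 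Your point (ii), by contrast, is a non-issue: $k_{\delta_{n^*}}\le k_j$ holds trivially because $\mathcal{C}(\delta_{n^*})$ contains a word of length $k_j$, and since the logarithm you are normalizing is positive this inequality already goes the right way.)

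The paper closes the sandwich by running the hard implication in the opposite direction, which removes the difficulty entirely. From $\underline P_\mu(t,q)>0$ (for $q>1$) one gets that the cut-set sums over $\mathcal{C}(\rho^k)$ are $<1$ for \emph{all} large $k$, where $c<\rho<1$; then for any $t'<t$ the full series $\sum_{k}\sum_{\mathbf{u}\in\Sigma^k}\|D\varphi_\mathbf{u}\|^{t'(1-q)}\mu([\bu])^q$ is dominated, via the covering $\Sigma^*\subseteq\bigcup_k\mathcal{C}(\rho^k)$, by a finite part plus $\sum_k\rho^{k(t'-t)(1-q)}<\infty$. Transferring an upper bound valid at every scale into the full series costs only the $\varepsilon$ of room between $t$ and $t'$ and requires no knowledge of how word lengths distribute across cut sets; transferring a lower bound living at a single level into a single cut set, as you do, is precisely what forces the pigeonhole and the extra subexponential estimate. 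I would reorganize the hard step along the paper's lines, i.e.\ prove $\overline d_q^*\ge\overline d_q$ (resp.\ $\underline d_q^*\le\underline d_q$) by showing that a strictly negative (resp.\ positive) cut-set pressure at $t$ forces convergence of the full series at every $t'$ on the appropriate side of $t$.
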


\begin{proof}
Since the proofs are similar, we only give the one for $q>1$. We write 
$$
d_q^1=\sup\{t:\sum_{k=1}^\infty\sum_{\mathbf{u}\in\Sigma^k}\|D\varphi_\mathbf{u}\|^{t(1-q)}\mu([\bu])^q<\infty\}.
$$
Note that $\overline{d}_q, \overline{d}_q^*$ and $d_q^1$ may take values of  $\infty$ and $-\infty$.

First, we show $\underline{d}_q\leq d_q^1 $.  For each non-integral $t$ such that $t<\underline{d}_q $, by \eqref{def_ULPF} and \eqref{dl}, there exists  $K_1>0$ such that
$$
\sum_{\mathbf{u}\in\Sigma^k}\|D\varphi_\mathbf{u}\|^{t(1-q)}\mu([\bu])^q<e^{-\frac{1}{2}k\underline{P}_\mu(t,q)}
$$
for each $k>K_1$, and it follows that 
$$
\sum_{k=K_1}^\infty\sum_{\mathbf{u}\in\Sigma^k}\|D\varphi_\mathbf{u}\|^{t(1-q)}\mu([\bu])^q<\sum_{k=K_1}^\infty e^{-\frac{1}{2}k\underline{P}_\mu((t,q)}<\infty.
$$
Hence  $t<d_q^1$ for all $t<\underline{d}_q $, and we obtain $\underline{d}_q\leq d_q^1 $.

Next, we show $d_q^1\leq \underline{d}_q^*$. For each $t<d_q^1$ we have
$$
\sum_{k=1}^\infty\sum_{\mathbf{u}\in\Sigma^k}\|D\varphi_\mathbf{u}\|^{t(1-q)}\mu([\bu])^q<\infty,
$$
and for every cut set $\mathcal{C}$, we have
$$
\sum_{\mathbf{u}\in\mathcal{C}}\|D\varphi_\mathbf{u}\|^{t(1-q)}\mu([\bu])^q<\sum_{k=1}^\infty\sum_{\mathbf{u}\in\Sigma^k}\|D\varphi_\mathbf{u}\|^{t(1-q)}\mu([\bu])^q<\infty.
$$
Then we have $\underline P_\mu(t, q)\ge 0$, which means $t\le \underline{d}_q^*$ and $d_q^1\leq \underline{d}_q^*$.

Finally, we show $\underline{d}_q^*\leq \underline{d}_q$. For each $t<\underline{d}_q^*$,  we have $\underline P_\mu(t, q)> 0$, and  there exists a $\Delta>0$ such that for each $\delta<\Delta$
$$
\sum_{\mathbf{u}\in\mathcal{C}(\delta)}\|D\varphi_\mathbf{u}\|^{t(1-q)}\mu([\bu])^q<e^{k_\delta\frac{-\underline P_\mu(t, q)}{2}}<1.
$$
Choosing $\rho$ such that $c<\rho<1$ where $c$ is given by \eqref{def_uccdn}, there exist  an integer $K>0$ such that  $\rho^{K+1}<\Delta\le\rho^{K}$. Since  $\Sigma^*=\bigcup_{k=0}^\infty\Sigma^k=\bigcup_{k=0}^\infty\mathcal{C}(\rho^k)$,  by Lemma \ref{contraction}, it follows that for all $t'<t$, 
\begin{eqnarray*}
\sum_{\mathbf{u}\in\Sigma^k}\|D\varphi_\mathbf{u}\|^{t'(1-q)}\mu([\bu])^q&\le&\sum_{k=1}^\infty\sum_{\mathbf{u}\in\Sigma^k}\|D\varphi_\mathbf{u}\|^{t'(1-q)}\mu([\bu])^q  \\
&\le&\sum_{k=1}^\infty \sum_{\mathbf{u}\in\mathcal{C}(\rho^k)}(\|D\varphi_\mathbf{u}\|^{t} \|D\varphi_\mathbf{u}\|^{t'-t})^{1-q}\mu([\bu])^q  \\
&\le&\sum_{k=1}^{K} \sum_{\mathbf{u}\in\mathcal{C}(\rho^k)}\|D\varphi_\mathbf{u}\|^{t'(1-q)}\mu([\bu])^q +\sum_{k=K}^\infty (\rho^k)^{(t'-t)(1-q)}  \\
&<&\infty,
\end{eqnarray*}
Hence $\underline P^\mu(t', q)\leq  0$, and  $t'\le  \underline{d}_q$ for all $t'<t$. It follows that $\underline{d}_q^*\le \underline{d}_q$. 
\end{proof}

\begin{proof}[Proof of Theorem \ref{simple}]
By Theorem \ref{Dq E} and Proposition \ref{cut to qici}, we have $\underline{D}_q(\mu^\omega)=\underline{d}_q$ for $q>1$, and $
\overline{D}_q(\mu^\omega)=\overline{d}_q$ for $0<q<1$.
\end{proof}

\section{generalized $q$-dimensions of Gibbs measures}
In this section, we study  autonomous conformal iterated function systems, that is,  $\boldsymbol{\Phi}=\{\Phi_k\}_{k=1}^\infty$ with  $\Phi_k=\Phi_1$ for all $k\geq 1$. 

First, we show that the pressure function exists and is given by 
$$
P^\mu(t, q)= \lim_{k\to\infty}\frac{\log  \sum_{\mathbf{u}\in\Sigma^k}||D\varphi_\mathbf{u}||^{t(1-q)}\mu([\bu])^q}{k},
$$
and we have $ d_q=\inf\{t:  P^\mu(t, q)<0\}=\sup\{t: P^\mu(t, q)>0\}.$

\begin{lem}\label{lem_Adq}
Given an ACIFS. Let $\mu$ be a Gibbs measure on $\Sigma^\infty$. Then for all $t\in \mathbb{R}$ and $q>0, q\not=1$,  $P^\mu(t, q)$ exists, and moreover $\underline{d}_q=\overline{d}_q=d_q$ is the unique  solution to $P^\mu(d_q, q)=0.$
\end{lem}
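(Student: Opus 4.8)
The plan is to exploit the autonomy of $\boldsymbol{\Phi}$ to show that, for fixed $t$ and $q>0$, $q\neq1$, the sequence
$a_k=a_k(t,q):=\sum_{\mathbf{u}\in\Sigma^k}\|D\varphi_\mathbf{u}\|^{t(1-q)}\mu([\bu])^q$
is almost multiplicative, so that $\tfrac1k\log a_k$ converges by subadditivity, and then to read off the jump point from the resulting (strictly monotone, continuous) limit. First I would record the two quasi-multiplicativity estimates that drive everything. Because $\Phi_k=\Phi_1$ for all $k$, concatenation is a bijection $\Sigma^k\times\Sigma^l\to\Sigma^{k+l}$, $(\mathbf{u},\bv)\mapsto\mathbf{u}\bv$, and $\varphi_\bv$ depends only on the symbols of $\bv$ and not on its position, so $\|D\varphi_\bv\|$ is unambiguous; Lemma \ref{cor_subMul} then gives $\|D\varphi_{\mathbf{u}\bv}\|\asymp\|D\varphi_\mathbf{u}\|\|D\varphi_\bv\|$ with constant independent of $\mathbf{u},\bv$. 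For the measure I would prove that a Gibbs measure on the full shift is quasi-Bernoulli, $\mu([\mathbf{u}\bv])\asymp\mu([\mathbf{u}])\mu([\bv])$.

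This is the step that truly uses \eqref{Gibbs}. Applying the two-sided Gibbs bound to two infinite words sharing their first $k$ symbols yields $|S_kf(\mathbf{w})-S_kf(\mathbf{w}')|\le 2\log(1/a)$ whenever $\mathbf{w}|_k=\mathbf{w}'|_k$, i.e. the Birkhoff sums have bounded variation on cylinders. Writing the cocycle identity $S_{k+l}f(\mathbf{u}\bv\cdots)=S_kf(\mathbf{u}\bv\cdots)+S_lf(\sigma^k(\mathbf{u}\bv\cdots))=S_kf(\mathbf{u}\bv\cdots)+S_lf(\bv\cdots)$ and replacing the first term by $S_kf$ of an arbitrary extension of $\mathbf{u}$ (at the cost of a bounded factor), the Gibbs bound converts this additive splitting into the desired multiplicative comparison of cylinder measures, with constants uniform in $\mathbf{u},\bv$.

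Next, raising the first estimate to the fixed power $t(1-q)$ and the second to the fixed power $q$, and summing over the bijection $\Sigma^k\times\Sigma^l\cong\Sigma^{k+l}$, I obtain $C^{-1}a_ka_l\le a_{k+l}\le Ca_ka_l$ for a constant $C=C_{t,q}\ge1$ independent of $k,l$. Hence $k\mapsto\log a_k+\log C$ is subadditive, and Fekete's lemma gives convergence of $\tfrac1k\log a_k$; the uniform exponential two-sided bounds on $\|D\varphi_\mathbf{u}\|$ (from Lemma \ref{u contraction} and Lemma \ref{cor_subMul}) and on $\mu([\bu])$ (from boundedness of $f$ in \eqref{Gibbs}) show the limit is finite. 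Multiplying by $\mathrm{sgn}(1-q)$ then gives $\overline P^\mu(t,q)=\underline P^\mu(t,q)=:P^\mu(t,q)$, finite for every $t\in\mathbb{R}$.

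Finally, since $\overline P^\mu=\underline P^\mu=P^\mu$, Corollary \ref{m d1} applies to $P^\mu(\cdot,q)$: it is strictly decreasing and, being convex or concave and finite on $\mathbb{R}$, continuous, and its jump points coincide, $\overline{d}_q=\underline{d}_q=:d_q$. The elementary comparison $a_k(t_2)\le c^{k(t_2-t_1)(1-q)}a_k(t_1)$ for $t_2>t_1$ from the proof of Lemma \ref{m d} passes to the limit to give a strictly negative uniform slope, whence $P^\mu(t,q)\to+\infty$ as $t\to-\infty$ and $\to-\infty$ as $t\to+\infty$; continuity and strict monotonicity then force a unique zero, which by the definition \eqref{dl} of the jump point is exactly $d_q$, so $P^\mu(d_q,q)=0$ and the solution is unique. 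I expect the main obstacle to be the quasi-Bernoulli estimate, namely turning the additive cocycle identity for $S_kf$ into a clean multiplicative comparison of cylinder measures with constants uniform in $\mathbf{u},\bv$, which is precisely where the full-shift structure and the uniform Gibbs constant $a$ are indispensable.
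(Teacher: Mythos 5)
Your proposal is correct and follows essentially the same route as the paper: both derive the quasi-Bernoulli property $\mu([\mathbf{u}\bv])\asymp\mu([\mathbf{u}])\mu([\bv])$ from the Gibbs bound and the cocycle identity for $S_kf$, combine it with Lemma \ref{cor_subMul} to get quasi-multiplicativity of the partition sums, invoke Fekete's lemma for existence of the limit, and then use Corollary \ref{m d1} together with the strict monotonicity and the limits at $\pm\infty$ to locate the unique zero. Your write-up is in fact somewhat more careful than the paper's (e.g.\ in ruling out a $-\infty$ limit via the two-sided bounds), but the argument is the same.
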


\begin{proof}
Since  $S_{k+l}f(\mathbf{u})=S_{k}f(\mathbf{u})+S_{l}f(\sigma^k\mathbf{u}) $   for all $k, l\in\mathbb{N}_+$,  
 it follows from applying \eqref{Gibbs} to cylinders $[\bu], [\bv]$  and $[\bu\bv]$ that
$$
a^3\le\frac{\mu([\bu\bv])}{\mu([\bu])\mu([\bv])}\le\frac{1}{a^3}.
$$
Given $q>0, q\not=1$,  without loss of generality, we assume  $t>0$. Since 
$$
\sum_{\mathbf{u}\in\Sigma^{k+l}}||D\varphi_\mathbf{u}||^{t(1-q)}\mu([\bu])^q\lesssim_{t,q}  \sum_{\mathbf{u}\in\Sigma^k}||D\varphi_\mathbf{u}||^{t(1-q)}\mu([\bu])^q   \sum_{\mathbf{u}\in\Sigma^l}||D\varphi_\mathbf{u}||^{t(1-q)}\mu([\bu])^q, 
$$
it immediately follows that 
$$
P^\mu(t, q)= \lim_{k\to\infty}\frac{\log  \sum_{\mathbf{u}\in\Sigma^k}||D\varphi_\mathbf{u}||^{t(1-q)}\mu([\bu])^q}{k}
$$
exists.  
By Corollary \ref{m d1},  $P^\mu(t, q)$ is continuous in $t$ and  strictly monotonically decreasing with  $\lim_{t\to\infty}P^\mu(d_q, q)=-\infty$ and $\lim_{t\to-\infty}P^\mu(d_q, q)=\infty$. Hence, for each fixed $q$, there exists a unique $d_q$ such that $P^\mu(d_q, q)=0.$            
\end{proof}

\begin{prop}\label{prop_lme}
Let $\mu$ be a Gibbs measure on $\Sigma^\infty$. If  $0<q<1$ and $s>d_q$, or if $q>1$ and $0<s<d_q$, then
$$
\sum_{k=0}^\infty\sum_{\mathbf{u}\in \Sigma^k}||D\varphi_\mathbf{u}||^{s(1-q)}\mu([\mathbf{u}])^q<\infty.
$$
If $0<q<1$ and $0<s<d_q$, or if $q>1$ and $s>d_q$, then
$$
\lim_{k\to\infty}\min_{\mathcal{C}:k_\mathcal{C}\ge k}\sum_{\mathbf{u}\in\mathcal{C}}||D\varphi_\mathbf{u}||^{s(1-q)}\mu([\mathbf{u}])^q=\infty,
$$
where the minimum is over cut-set $\mathcal{C}$ for which $k_\mathcal{C}\ge k$.
\end{prop}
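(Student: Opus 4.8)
The plan is to reduce both assertions to the sign of a single exponential growth rate. Write $W(\mathbf{u})=\|D\varphi_\mathbf{u}\|^{s(1-q)}\mu([\mathbf{u}])^q$ and $\Theta_k=\sum_{\mathbf{u}\in\Sigma^k}W(\mathbf{u})$, so that by Lemma \ref{lem_Adq} the limit $g(s):=\lim_{k\to\infty}\frac1k\log\Theta_k$ exists and vanishes exactly at $s=d_q$. As in Lemma \ref{m d} and Corollary \ref{m d1}, $g$ is strictly monotone in $s$ (decreasing when $0<q<1$, increasing when $q>1$, since $\|D\varphi_\mathbf{u}\|\le c<1$ and $1-q$ controls the sign of the exponent). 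Checking the four cases against $g(d_q)=0$ then shows that $g(s)<0$ precisely in the two scenarios of the first assertion and $g(s)>0$ precisely in the two scenarios of the second. The one structural fact I will use throughout is quasi-multiplicativity of $W$: combining Lemma \ref{cor_subMul} (so $\|D\varphi_{\mathbf{u}\mathbf{v}}\|\asymp\|D\varphi_\mathbf{u}\|\|D\varphi_\mathbf{v}\|$) with the Gibbs estimate $a^{3}\le\mu([\mathbf{u}\mathbf{v}])/(\mu([\mathbf{u}])\mu([\mathbf{v}]))\le a^{-3}$ recorded in the proof of Lemma \ref{lem_Adq}, and using that the system is autonomous so suffix weights coincide with standalone weights, yields $W(\mathbf{u}\mathbf{v})\asymp_{s,q}W(\mathbf{u})W(\mathbf{v})$ with constants independent of the words; in particular $\Theta_{m+n}\asymp_{s,q}\Theta_m\Theta_n$.

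The first assertion is then immediate: when $g(s)<0$, fix $\epsilon\in(0,-g(s))$, so that $\Theta_k\le e^{(g(s)+\epsilon)k}$ for all large $k$ and $\sum_{k}\sum_{\mathbf{u}\in\Sigma^k}W(\mathbf{u})=\sum_k\Theta_k$ is dominated by a convergent geometric series.

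For the second assertion I would compare an arbitrary deep cut set to a single full level. Fix a cut set $\mathcal{C}$ with $k_{\mathcal{C}}\ge k$; by compactness of $\Sigma^\infty$ and disjointness of the cylinders $\mathcal{C}$ is finite, so $N:=\max\{|\mathbf{u}|:\mathbf{u}\in\mathcal{C}\}<\infty$. Every $\mathbf{v}\in\Sigma^N$ has a unique prefix in $\mathcal{C}$, so grouping $\Sigma^N$ by $\mathcal{C}$-prefix and applying quasi-multiplicativity blockwise gives
$$
\Theta_N=\sum_{\mathbf{u}\in\mathcal{C}}\sum_{\substack{\mathbf{v}\in\Sigma^N\\ \mathbf{u}\prec\mathbf{v}}}W(\mathbf{v})\lesssim_{s,q}\sum_{\mathbf{u}\in\mathcal{C}}W(\mathbf{u})\,\Theta_{N-|\mathbf{u}|}.
$$
Eliminating $N$ with the supermultiplicative bound $\Theta_N\gtrsim_{s,q}\Theta_{|\mathbf{u}|}\Theta_{N-|\mathbf{u}|}$, hence $\Theta_{N-|\mathbf{u}|}\lesssim_{s,q}\Theta_N/\Theta_{|\mathbf{u}|}$, yields $1\lesssim_{s,q}\sum_{\mathbf{u}\in\mathcal{C}}W(\mathbf{u})/\Theta_{|\mathbf{u}|}$. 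Since $g(s)>0$, pick $\epsilon\in(0,g(s))$ and $k$ so large that $\Theta_m\ge e^{(g(s)-\epsilon)m}\ge e^{(g(s)-\epsilon)k}$ for all $m\ge k$; as every $\mathbf{u}\in\mathcal{C}$ satisfies $|\mathbf{u}|\ge k$, this bounds each $1/\Theta_{|\mathbf{u}|}$ by $e^{-(g(s)-\epsilon)k}$ and gives $\sum_{\mathbf{u}\in\mathcal{C}}W(\mathbf{u})\gtrsim_{s,q}e^{(g(s)-\epsilon)k}$. This lower bound is uniform over all admissible $\mathcal{C}$ and tends to infinity, which is the claim.

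The main obstacle is the second assertion, and within it the need for a bound uniform over all cut sets with $k_{\mathcal{C}}\ge k$, however uneven: such a set may terminate some branches at depth exactly $k$ and continue others arbitrarily deep. The device that handles this is the comparison to the single full level $\Theta_N$ together with the two-sided (super- and sub-) multiplicativity of $\Theta$, which converts the uncontrolled lengths $|\mathbf{u}|$ into the harmless factors $\Theta_N/\Theta_{|\mathbf{u}|}$. The only points needing care are that quasi-multiplicativity holds with constants genuinely independent of the (autonomous) level and that the estimate $\Theta_m\ge e^{(g(s)-\epsilon)m}$ is valid uniformly for all $m\ge k$ rather than merely asymptotically; both follow from Lemma \ref{cor_subMul}, the Gibbs property, and the existence of the limit $g$ established in Lemma \ref{lem_Adq}.
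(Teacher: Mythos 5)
Your proof is correct, and while the convergence half matches the paper, your divergence half takes a genuinely different route. For convergence you and the paper do essentially the same thing: both reduce to exponential decay of the level sums $\Theta_k=\sum_{\mathbf{u}\in\Sigma^k}\|D\varphi_\mathbf{u}\|^{s(1-q)}\mu([\mathbf{u}])^q$ when $s$ lies on the appropriate side of $d_q$ (the paper compares exponents via the factor $M_1^{k(1-q)(s-d_q)}$ together with $P^\mu(d_q,q)=0$; you invoke strict monotonicity of $g(s)=\lim_k\frac1k\log\Theta_k$ — these are the same estimate). For the divergence of cut-set sums the paper picks a block length $K_1$ with $a^{3q}\Theta_{K_1}>1$, iterates the Gibbs quasi-multiplicativity to force growth $(a^{3q}\gamma)^l$ along cut sets whose word lengths are multiples of $K_1$, and then absorbs the at-most-$K_1$ length discrepancy between a general cut set and such a block cut set into a constant $M$. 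You instead expand the single full level $N=\max\{|\mathbf{u}|:\mathbf{u}\in\mathcal{C}\}$ over the prefixes in $\mathcal{C}$, use two-sided quasi-multiplicativity of $\Theta$ to replace $\Theta_{N-|\mathbf{u}|}$ by $\Theta_N/\Theta_{|\mathbf{u}|}$, and cancel the finite positive quantity $\Theta_N$ to obtain the uniform inequality $\sum_{\mathbf{u}\in\mathcal{C}}W(\mathbf{u})/\Theta_{|\mathbf{u}|}\gtrsim_{s,q}1$, from which divergence follows since $\Theta_{|\mathbf{u}|}\ge e^{(g(s)-\epsilon)k}$ uniformly for $|\mathbf{u}|\ge k$ once $k$ is large. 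Your version buys a cleaner uniformity over uneven cut sets — no choice of $K_1$, no remainder blocks, and a constant visibly independent of $\mathcal{C}$ — at the mild cost of needing both directions of $\Theta_{m+n}\asymp_{s,q}\Theta_m\Theta_n$, which the autonomy of the system and the Gibbs property do supply. Both arguments ultimately rest on the same inputs: Lemma \ref{cor_subMul}, the Gibbs bound $a^3\le\mu([\mathbf{u}\mathbf{v}])/(\mu([\mathbf{u}])\mu([\mathbf{v}]))\le a^{-3}$, and the existence of the limit from Lemma \ref{lem_Adq}.
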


\begin{proof}
Recall that $M_k=\max_{\mathbf{u} \in \Sigma^k} \{\|D\varphi_\mathbf{u}\|\}$ for every $k\geq 1$. 

First consider  $q>1$ and  $0<s<d_q$.  For each $\mathbf{u}\in\Sigma^k$, it is clear that 
$$
||D\varphi_\mathbf{u}||^s\ge    M_k^{s-d_q}        ||D\varphi_\mathbf{u}||^{d_q}\ge M_1^{k(s-d_q)}||D\varphi_\mathbf{u}||^{d_q},
$$
and it follows that 
$$
\sum_{\mathbf{u}\in \Sigma^k}||D\varphi_\mathbf{u}||^{s(1-q)}\mu([\mathbf{u}])^q\le M_1^{k(1-q)(s-d_q)}\sum_{\mathbf{u}\in \Sigma^k}||D\varphi_\mathbf{u}||^{d_q(1-q)}\mu([\mathbf{u}])^q.
$$          
Since $P^\mu(d_q,q)=0$, by Lemma \ref{u contraction}, it follows that
$$
\sum_{k=0}^\infty\sum_{\mathbf{u}\in \Sigma^k}||D\varphi_\mathbf{u}||^{s(1-q)}\mu([\mathbf{u}])^q<\infty.
$$

For $q>1$ and $s>d_q$, similarly, we have  
$$
\sum_{\mathbf{u}\in \Sigma^k}||D\varphi_\mathbf{u}||^{s(1-q)}\mu([\mathbf{u}])^q\ge M_1^{k(1-q)(s-d_q)}\sum_{\mathbf{u}\in \Sigma^k}||D\varphi_\mathbf{u}||^{d_q(1-q)}\mu([\mathbf{u}])^q.
$$
Since $P^\mu(s,q)>0$, there exists a number $\gamma>\frac{1}{a^{3q}}$ and a positive integer $K_1$ such that for $k>K_1$
$$
\sum_{\mathbf{u}\in \Sigma^{K_1}}||D\varphi_\mathbf{u}||^{s(1-q)}\mu([\mathbf{u}])^q\ge\gamma.
$$
Since $\mu$ is a Gibbs measure, by \eqref{Gibbs}, it is clear that 
$$
\sum_{\mathbf{u}\in \Sigma^{K_1}}||D\varphi_\mathbf{vu}||^{s(1-q)}\mu([\mathbf{vu}])^q\ge a^{3q}||D\varphi_\mathbf{v}||^{s(1-q)}\mu([\mathbf{v}])^q\sum_{\mathbf{u}\in \Sigma^{K_1}}||D\varphi_\mathbf{u}||^{s(1-q)}\mu([\mathbf{u}])^q. 
$$
Let $\mathcal{C}_0$ be a cut-set such that  for every $\mathbf{u}\in \mathcal{C}_0$, there exist an integer $l\geq 1$  satisfying   $|\mathbf{u}|=l K_1$. It follows that 
$$
 \sum_{\mathbf{u}\in \mathcal{C}_0}||D\varphi_\mathbf{u}||^{s(1-q)}\mu([\mathbf{u}])^q\ge \sum_{\mathbf{u}\in \Sigma^{p{K_1}}}||D\varphi_\mathbf{u}||^{s(1-q)}\mu([\mathbf{u}])^q\ge(a^{3q}\gamma)^l.
$$
Given a cut-set $\mathcal{C}$ with $k_\mathcal{C}\ge l{K_1}$,  for every $\mathbf{u}\in\mathcal{C}$, there exists $\mathbf{v}=\mathbf{u}|k\in\mathcal{C}_0$ with $0\le|\mathbf{u}|-k<{K_1}$, and we have 
$$
M\sum_{\mathbf{u}\in \mathcal{C}}||D\varphi_\mathbf{u}||^{s(1-q)}\mu([\mathbf{u}])^q\ge\sum_{\mathbf{u}\in \mathcal{C}_0}||D\varphi_\mathbf{u}||^{s(1-q)}\mu([\mathbf{u}])^q\ge(a^{3q}\gamma)^l,
$$
where $M=(\frac{\# IC^{s(q-1)}M_1^{s(1-q)}}{a^{3q}})^{K_1}$.  This implies
$$
\lim_{k\to\infty}\min_{\mathcal{C}:k_\mathcal{C}\ge k}\sum_{\mathbf{u}\in\mathcal{C}}||D\varphi_\mathbf{u}||^{s(1-q)}\mu([\mathbf{u}])^q=\infty.
$$

A similar argument holds for $0<q<1$,  and we omit its proof.
\end{proof}

\begin{prop}\label{cut to qici A}
Given ACIFS $\boldsymbol{\Phi}$.  Let $\mu$ be a Gibbs measure on $\Sigma^\infty$ define by \eqref{mu}, and $\mu^\omega$ is the image measure of $\mu$. Then   $\underline{d}_q^*= d_q=\overline{d}_q^*$ for $q>0, q\neq1$.
\end{prop}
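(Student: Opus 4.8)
The plan is to fix $q>0$ with $q\neq1$ and prove the two equalities $\underline{d}_q^*=d_q$ and $\overline{d}_q^*=d_q$ by sandwiching, where $d_q$ is the unique zero of $P^\mu(\cdot,q)$ furnished by Lemma \ref{lem_Adq}. The only inequality that is free is $\underline{d}_q^*\le\overline{d}_q^*$, which follows at once from $\underline P_\mu(t,q)\le\overline P_\mu(t,q)$ (a $\liminf$ never exceeds the corresponding $\limsup$) together with the jump-point definitions \eqref{d*}. The engine for the remaining bounds is Proposition \ref{prop_lme}, which controls the cut-set sums $A(\delta,s):=\sum_{\mathbf{u}\in\mathcal{C}(\delta)}\|D\varphi_\mathbf{u}\|^{s(1-q)}\mu([\bu])^q$ according to the side of $d_q$ on which the exponent $s$ lies. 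Before applying it I would record two elementary facts: first, $\mathcal{C}(\delta)$ is a cut set with $k_{\mathcal{C}(\delta)}=k_\delta\to\infty$ as $\delta\to0$ (for each fixed length there are finitely many words, all with $\|D\varphi_\mathbf{u}\|$ bounded below by a positive constant, so no short word survives once $\delta$ is small); second, since $\mathcal{C}(\delta)\subset\Sigma^*$ and all summands are nonnegative, $A(\delta,s)\le\sum_{k=1}^\infty\sum_{\mathbf{u}\in\Sigma^k}\|D\varphi_\mathbf{u}\|^{s(1-q)}\mu([\bu])^q$.

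On the convergent side of $d_q$ (namely $q>1,\ 0<s<d_q$, or $0<q<1,\ s>d_q$) Proposition \ref{prop_lme} makes the full series finite, so $A(\delta,s)\le S$ for a constant $S$ and all small $\delta$. Hence $\tfrac{\mathrm{sgn}(1-q)}{k_\delta}\log A(\delta,s)$ is squeezed toward $0$ from the correct side: for $q>1$ it is $\ge-\tfrac{\log S}{k_\delta}\to0$, forcing $\underline P_\mu(s,q)\ge0$ and hence (by monotonicity of $\underline P_\mu$) $\underline{d}_q^*\ge s$; for $0<q<1$ it is $\le\tfrac{\log S}{k_\delta}\to0$, forcing $\overline P_\mu(s,q)\le0$ and hence $\overline{d}_q^*\le s$. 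Letting $s\uparrow d_q$ (resp. $s\downarrow d_q$) gives $d_q\le\underline{d}_q^*$ when $q>1$, and $\overline{d}_q^*\le d_q$ when $0<q<1$.

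On the divergent side ($q>1,\ s>d_q$, or $0<q<1,\ 0<s<d_q$) Proposition \ref{prop_lme} gives $A(\delta,s)\ge\min_{\mathcal{C}:\,k_\mathcal{C}\ge k_\delta}\sum_{\mathbf{u}\in\mathcal{C}}\|D\varphi_\mathbf{u}\|^{s(1-q)}\mu([\bu])^q$, and since $k_\delta\to\infty$ this minimum tends to $\infty$; thus $A(\delta,s)>1$ for small $\delta$, so $\log A(\delta,s)>0$. Consequently $\tfrac{\mathrm{sgn}(1-q)}{k_\delta}\log A(\delta,s)$ is eventually of sign $\mathrm{sgn}(1-q)$: for $q>1$ it is eventually negative, whence $\overline P_\mu(s,q)\le0$ and $\overline{d}_q^*\le s$; for $0<q<1$ it is eventually positive, whence $\underline P_\mu(s,q)\ge0$ and $\underline{d}_q^*\ge s$. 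Letting $s\to d_q$ yields $\overline{d}_q^*\le d_q$ when $q>1$, and $d_q\le\underline{d}_q^*$ when $0<q<1$. In either regime these bounds combine with $\underline{d}_q^*\le\overline{d}_q^*$ into the chain $d_q\le\underline{d}_q^*\le\overline{d}_q^*\le d_q$, which is the assertion.

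The delicate point is purely bookkeeping: one must track how the factor $\mathrm{sgn}(1-q)$ interacts with $\liminf$ versus $\limsup$, since it is $\underline{d}_q^*$ that gets pinned from below and $\overline{d}_q^*$ from above, while the roles of the two sides of $d_q$ interchange as $q$ crosses $1$. The passages from a sign of the pressure to a bound on the jump point rely on the monotonicity and on the equivalence of the two forms in \eqref{d*} supplied by Lemma \ref{m d}; this is exactly why I take $s$ strictly on each side of $d_q$ and only pass to the limit $s\to d_q$ at the very end. Verifying that $k_\delta\to\infty$ and that $\mathcal{C}(\delta)$ is admissible in the minimum of Proposition \ref{prop_lme} is routine, and no appeal to Proposition \ref{cut to qici} is needed, although it would supply an alternative route to half of the argument in each regime.
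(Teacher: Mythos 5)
Your argument is correct and follows essentially the same route as the paper: both hinge on Proposition \ref{prop_lme}, the fact that $\mathcal{C}(\delta)$ is a cut set with $k_\delta\to\infty$, and the domination of cut-set sums by the full series over $\Sigma^*$. The only difference is presentational — you argue directly from the position of $s$ relative to $d_q$ to a sign of the pressure, whereas the paper runs the contrapositive (starting from $t<\overline{d}_q^*$ or $t>\underline{d}_q^*$ and deducing the corresponding bound against $d_q$) — so the two proofs are logically the same.
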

\begin{proof}
Since $\underline{d}_q^*\leq \overline{d}_q^*$, we only prove   $\underline{d}_q^*\ge d_q\ge \overline{d}_q^*$ for $q>1$. For each $t<\overline{d}_q^*$, we have $\overline P_\mu(t, q)>0$, and by \eqref{d*}, there exists $\{\delta_m\}$ such that
$$
\sum_{\mathbf{u}\in\mathcal{C}(\delta_m)}||D\varphi_\mathbf{u}||^{t(1-q)}\mu([\bu])^q<e^{-\frac{1}{2}k_{\delta_m} \overline P_\mu(t, q)}.
$$
Since 
$$
\min_{\mathcal{C}:k_\mathcal{C}\ge k_{\delta_m}}\sum_{\mathbf{u}\in\mathcal{C}}||D\varphi_\mathbf{u}||^{s(1-q)}\mu([\mathbf{u}])^q\le\sum_{\mathbf{u}\in\mathcal{C}(\delta_m)}||D\varphi_\mathbf{u}||^{t(1-q)}\mu([\bu])^q , 
$$
it immediately follows that 
$$
\lim_{\delta\to0}\min_{\mathcal{C}:k_\mathcal{C}\ge k_{\delta} }\sum_{\mathbf{u}\in\mathcal{C}}||D\varphi_\mathbf{u}||^{s(1-q)}\mu([\mathbf{u}])^q=0, 
$$
which implies that 
$$
\lim_{k\to\infty}\min_{\mathcal{C}:k_\mathcal{C}\ge k}\sum_{\mathbf{u}\in\mathcal{C}}||D\varphi_\mathbf{u}||^{s(1-q)}\mu([\mathbf{u}])^q=0. 
$$
By Proposition \ref{prop_lme}, it follows that $t\le d_q$ for all $t<\overline{d}_q^*$, and we obtain $\overline{d}_q^*\le d_q$.

For any $t>\underline{d}_q^*$, we have $\underline P_\mu(t, q)<0$,  which means there exists $\{\delta_m\}$ such that
$$
\frac{1}{k_{\delta_m}}\log \sum_{\mathbf{u}\in\mathcal{C}(\delta_m)}\|D\varphi_\mathbf{u}\|^{t(1-q)}\mu([\bu])^q > -\frac{1}{2}\underline P_\mu(t, q),
$$
and
$$
\sum_{\mathbf{u}\in\mathcal{C}(\delta_m)}||D\varphi_\mathbf{u}||^{t(1-q)}\mu([\bu])^q>e^{-\frac{1}{2}k_{\delta_m}  \underline P_\mu(t, q)},
$$
Since for every $\delta_m$ we have
$$
e^\frac{-\underline P_\mu(t, q)k_{\delta_m}}{2}<\sum_{\mathbf{u}\in\mathcal{C}(\delta_m)}||D\varphi_\mathbf{u}||^{t(1-q)}\mu([\bu])^q\le\sum_{k=1}^\infty\sum_{\mathbf{u}\in\Sigma^k}||D\varphi_\mathbf{u}||^{t(1-q)}\mu([\bu])^q.
$$
It follows that
$$
\sum_{k=1}^\infty\sum_{\mathbf{u}\in\Sigma^k}||D\varphi_\mathbf{u}||^{t(1-q)}\mu([\bu])^q=\infty,
$$
and hence $t\ge d_q$. Therefore,  $\underline{d}_q^*\ge d_q$. 
\end{proof}

\begin{proof}[Proof of Theorem \ref{thm_HNCIFS}]
It  follows by Theorem \ref{Dq E} and Proposition \ref{cut to qici A}. 
\end{proof}

\end{document}